\numberwithin{equation}{section}
\theoremstyle{plain}
\newtheorem{theorem}{Theorem}[section]
\newtheorem{lemma}[theorem]{Lemma}
\newtheorem{corollary}[theorem]{Corollary}
\newtheorem{proposition}[theorem]{Proposition}
\theoremstyle{definition}
\newtheorem{example}[theorem]{Example}
\newtheorem{remark}[theorem]{Remark}
\newtheorem{definition}[theorem]{Definition}
\def\0{{\bf 0}}
\def\F{\mathbb{F}}
\def\G{\mathbb{G}}
\def\M{\mathbb{M}}
\def\NN{\mathbb{N}}
\def\R{\mathcal{R}}
\def\T{{\bf T}}
\def\ZZ{\mathbb{Z}}
\def\RR{\mathbb{R}}
\def\ra{\rightarrow}
\def\a{\frak{a}}
\def\b{\frak{b}}
\def\c{{\bf c}}
\def\x{{\bf x}}
\def\m{\frak{m}}
\def\nn{\frak{b}}
\DeclareMathOperator{\supp}{Supp}
\DeclareMathOperator{\reg}{reg}
\DeclareMathOperator{\tor}{Tor}
\DeclareMathOperator{\Hom}{Hom}
\begin{document}

\author{Amir Bagheri}
\address{Institut de Math\'ematiques de Jussieu\\
UPMC, Boite 247, 4, place Jussieu, F-75252 Paris Cedex, France}
\email{bagheri@math.jussieu.fr}

\author{Marc Chardin}
\address{Institut de Math\'ematiques de Jussieu\\
UPMC, Boite 247, 4, place Jussieu, F-75252 Paris Cedex, France\\
\url{http://people.math.jussieu.fr/~chardin}}
\email{chardin@math.jussieu.fr}

\author{Huy T{\`a}i H{\`a}}
\address{Department of Mathematics\\
Tulane University\\
6823 St. Charles Ave., New Orleans, LA 70118, USA\\
\url{www.math.tulane.edu/~tai/}}
\email{tha@tulane.edu}

\title{The eventual shape of Betti tables of powers of ideals}

\keywords{Betti numbers, asymptotic linearity, multigraded}
\subjclass[2000]{13D45, 13D02}

\begin{abstract}
Let $G$ be an abelian group and $S$ be a $G$-graded a Noetherian algebra over a commutative ring $A\subseteq S_0$. Let $I_1, \dots, I_s$ be $G$-homogeneous ideals in $S$, and let $M$ be a finitely generated $G$-graded $S$-module. We show that the shape of nonzero $G$-graded Betti numbers of $MI_1^{t_1} \dots I_s^{t_s}$ exhibit an eventual linear behavior as the $t_i$s get large.
\end{abstract}

\maketitle


\section{Introduction}

It is a celebrated result (cf. \cite{CHT, Ko, TW}) that if $I \subseteq S$ is a
homogeneous ideal in a Noetherian standard $\NN$-graded algebra and $M$ is a finitely
generated $\ZZ$-graded $S$-module then the regularity $\reg(I^tM)$ is
asymptotically a linear function for $t \gg 0$. This asymptotic linear function
and the stabilization index have also been studied in \cite{Be, Ch, EH, EU, Ha}.

In the case $S$ is a polynomial ring over a field, a more precise result is proved in
 \cite{CHT}: the maximal degree of an $i$-th syzygy of $I^tM$ is eventually a linear
 function of $t$. Our interest here is to understand the eventual behavior of the
 degrees of all the minimal generators of the $i$-th syzygy module. This is of particular interest when the grading is given by
 some finitely generated abelian group $G$ that is not $\ZZ$, as in this case the result for
 the regularity of powers do not have an evident analogue. One can in particular
 consider the Cox ring of a toric variety, graded by the divisor class group.

We shall actually show, in the $G$-graded setting, that the collection of nonzero $G$-graded Betti numbers of
$MI_1^{t_1} \dots I_s^{t_s}$ exhibits an asymptotic linear behavior as the
$t_i$s get large.
Let us also point out that, even when an explicit minimal free resolution of these powers is known, for instance when $I$ is a complete intersection ideal, the degrees of $i$-th syzygies of $I^t$ do not exhibit trivially  a linear behavior.

Throughout the paper, let $G$ be a finitely generated abelian group and let $S = A[x_1, \dots, x_n]$ be a $G$-graded algebra over a commutative ring $A\subseteq S_0$. Hence $A = S/(x_1
, \dots, x_n)$ is a $G$-graded $S$-module concentrated in degree $0$.

Our work hinges on the relationship between multigraded Betti numbers and the graded pieces of $\tor^S_i(MI_1^{t_1} \dots I_s^{t_s}, A)$; we, thus, examine the support of $\tor^S_i(MI_1^{t_1}\cdots I_s^{t_s}, A)$ as the $t_i$s get large. Our main result, in the case of a single ideal, gives the following:

\begin{theorem}[see Theorem \ref{thm.arbitrarydegree2}] \label{thm.intro1}
Let $I=(f_{1}, \dots, f_{r})$ be a $G$-homogeneous $S$-ideal, and let $M$ be a finitely generated $G$-graded $S$-module. Set $\Gamma := \{\deg_G(f_{i})\}_{i=1}^{r}$.

Let $\ell \ge 0$, and assume that $\ell =0$ or $A$ is Noetherian.

There exist a finite collection of elements $\delta_i \in G$, a finite collection of integers $t_{i}$, and a finite collection of non-empty tuples $E_{i}=(\gamma_{i,1},\ldots ,\gamma_{i,s_i})$ of elements in $\Gamma$, such that the
elements $(\gamma_{i,j+1}-\gamma_{i,j}, \  1\leq j\leq s_i -1)$ are linearly independent, satisfying

$$\supp_G(\tor^S_\ell(MI^{t},A)) = \bigcup_{i=1}^m \big( \bigcup_{{c_1+\cdots +c_{s_i} = t-t_{i},}\atop{\ c_1,\ldots ,c_{s_i}\in \NN}} c_1\gamma_{i,1}+\cdots +c_{s_i}\gamma_{i,s_i} +\delta_i \big), \quad  \forall t\geq \max_i \{ t_{i}\} .
$$
\end{theorem}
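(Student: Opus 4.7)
My plan is to assemble all the modules $MI^t$ into one finitely generated bigraded module over an auxiliary polynomial ring, analyze the support of the associated Tor via a prime filtration, and finish with a combinatorial (affine-semigroup) decomposition. First, I put $\tilde{\mathcal{M}}:=\bigoplus_{t\ge 0}MI^t$; via the surjection $\mathcal{S}:=S[y_1,\dots,y_r]\twoheadrightarrow \mathcal{R}(I)$ sending $y_i\mapsto f_i$, $\tilde{\mathcal{M}}$ becomes a finitely generated $G\times\ZZ$-graded module over $\mathcal{S}$ with $\deg(y_i)=(\gamma_i,1)$. Then
\[
N:=\tor^S_\ell(\tilde{\mathcal{M}},A)=\bigoplus_{t\ge 0}\tor^S_\ell(MI^t,A)
\]
is a $G\times\ZZ$-graded module over $\mathcal{B}:=A[y_1,\dots,y_r]=\mathcal{S}/(x_1,\dots,x_n)$, and it is finitely generated over $\mathcal{B}$: for $\ell=0$ because $N=\tilde{\mathcal{M}}/(x)\tilde{\mathcal{M}}$ is a quotient of $\tilde{\mathcal{M}}$; for $\ell\ge 1$ because the Noetherian hypothesis on $A$ makes $\mathcal{S}$ Noetherian, so that $\tilde{\mathcal{M}}$ admits a finite free resolution over $\mathcal{S}$. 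The theorem then reduces to the structural statement that for \emph{any} finitely generated $G\times\ZZ$-graded $\mathcal{B}$-module $N$ (with $\deg(y_i)=(\gamma_i,1)$), $\supp_G(N_t)$ has the described form for $t\gg 0$.

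To establish this structural statement (when $A$ is Noetherian; the $\ell=0$ case I would handle analogously using a finite $\mathcal{B}$-generating set of $N$), I would take a finite graded prime filtration $0=N^{(0)}\subset\cdots\subset N^{(k)}=N$ with quotients $(\mathcal{B}/\mathfrak{p}_j)(-\delta_j,-t_j)$, so that $\supp_{G\times\ZZ}(N)$ is the union of the shifted supports of these cyclic quotients. The heart of the argument is computing $\supp_{G\times\ZZ}(\mathcal{B}/\mathfrak{p})$ for a $G\times\ZZ$-graded prime $\mathfrak{p}\subset\mathcal{B}$: setting $T:=\{i:y_i\notin\mathfrak{p}\}$, any monomial $y^a$ with $a_i>0$ for some $i\notin T$ is divisible by $y_i\in\mathfrak{p}$ and hence lies in $\mathfrak{p}$, while $\prod_{i\in T}y_i^{a_i}$ is a product of nonzero elements in the domain $\mathcal{B}/\mathfrak{p}$. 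Since the graded pieces of $\mathcal{B}$ are $A$-spanned by monomials, this yields
\[
\supp_{G\times\ZZ}(\mathcal{B}/\mathfrak{p})\;=\;S_T\;:=\;\Big\{\sum_{i\in T}a_i(\gamma_i,1):a\in\NN^T\Big\},
\]
the affine semigroup generated by $\{(\gamma_i,1):i\in T\}$. Consequently each cyclic quotient contributes the shifted slice $\delta_j+\{\sum_{i\in T_j}a_i\gamma_i:a\in\NN^{T_j},\ |a|=t-t_j\}$ to $\supp_G(N_t)$.

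The last step will be to decompose each semigroup $S_T$ as a finite union $S_T=\bigcup_\alpha\bigl((\delta_\alpha,t_\alpha)+S_{E_\alpha}\bigr)$ with $E_\alpha\subseteq\{\gamma_i:i\in T\}$ affinely independent (equivalently, consecutive differences $\gamma_{\alpha,j+1}-\gamma_{\alpha,j}$ linearly independent). This is a standard affine-semigroup / toric geometry fact: one triangulates the rational cone spanned by $\{(\gamma_i,1):i\in T\}$ into simplicial cones whose extremal rays are drawn from the same set, and expresses the integer points of each simplicial cone as a finite union of cosets of the sublattice spanned by its extremal rays; each coset, intersected with the hyperplane $\ZZ$-degree $=t$, becomes a shifted affinely independent simplex image $\delta+\{\sum c_j\gamma_{i_j}:\sum c_j=t-t_0,\ c_j\in\NN\}$. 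Collecting these decompositions over all cyclic quotients in the prime filtration produces the finite list of tuples $E_i$ and shifts $(\delta_i,t_i)$ and, once $t\ge\max_i t_i$, every contribution is nonempty, yielding the asserted equality. The main technical hurdle will be precisely this last decomposition: the integrality obstructions (cosets modulo the extremal-ray sublattice) must be absorbed into a finite collection of shifts independent of $t$, which uses finite generation of $G$ crucially together with the cone-triangulation argument.
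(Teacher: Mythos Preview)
Your overall architecture matches the paper's: pass to the Rees module $\tilde{\mathcal{M}}$, identify $\bigoplus_t \tor^S_\ell(MI^t,A)$ with a finitely generated $G\times\ZZ$-graded $\mathcal{B}=A[y_1,\dots,y_r]$-module (the paper realizes this as $H_\ell(\F_\bullet\otimes_R R/\m R)$ for a free $R$-resolution $\F_\bullet$ of $\tilde{\mathcal{M}}$), and then analyze the support of that module. Where you diverge is in the tools for this last step. The paper passes to an \emph{initial module} with respect to a monomial order (Proposition~\ref{lem.initial}), reducing to a direct sum of quotients $\mathcal{B}/I_i$ by monomial ideals, and then invokes Stanley decompositions (Lemma~\ref{lem.monomial}); a second pass with the toric ideal $H$ of degree relations and its monomial initial ideal produces the linear-independence refinement. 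You instead use a graded prime filtration followed by a cone-triangulation argument.

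The prime-filtration step is a legitimate alternative to the paper's initial-module reduction, and your computation $\supp_{G\times\ZZ}(\mathcal{B}/\mathfrak{p})=S_T$ is correct. Two caveats. First, prime filtrations require $\mathcal{B}$ Noetherian, so your argument does not cover the case $\ell=0$ with $A$ non-Noetherian; the paper's initial-module/Stanley approach needs no Noetherian hypothesis and handles all cases uniformly. Second, and more seriously, your final decomposition step has a gap as written: triangulating the real cone over $\{(\gamma_i,1):i\in T\}$ and decomposing the lattice points of each simplicial cone into translates of the free monoid on its extremal rays produces the \emph{saturation} of $S_T$, not $S_T$ itself. For instance with $G=\ZZ$ and $T$ giving degrees $\{0,1,3\}$, the point $(3t{-}1,t)$ lies in the cone for every $t\ge1$ but never in $S_T$, so your procedure would overcount. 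The fact you need is nonetheless true, but its proof is essentially the paper's: write $A[S_T]=A[y_i:i\in T]/H$ for the toric ideal $H$, replace $H$ by $\operatorname{in}_\prec(H)$ without changing the support (Proposition~\ref{lem.initial}), and take a Stanley decomposition of the monomial quotient; the fact that $A[S_T]$ has at most one monomial per degree then forces the variable sets in that decomposition to have linearly independent degrees. Alternatively, one may simply note that $A[S_T]$ is module-finite over $A[y_{i_1},\dots,y_{i_k}]$ for any maximal linearly independent subset of the generators, which already gives $S_T$ as a finite union of translates of a single free monoid.
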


The condition of linear independence stated for $E_i$ implies that $c_1\gamma_{i,1}+\cdots +c_{s_i}\gamma_{i,s_i}\not= c'_1\gamma_{i,1}+\cdots +c'_{s_i}\gamma_{i,s_i}$ if $c_1+\cdots +c_{s_i} =c'_1+\cdots +c'_{s_i} $ and $(c_1,\ldots ,c_{s_i})\not= (c'_1,\ldots ,c'_{s_i})$. Notice the important fact that the elements in $E_i$ are all in $\Gamma$, the set of degrees of generators of $I$.

Theorem \ref{thm.arbitrarydegree2} is proved in the last section of the paper. Our proof is based on the two following observations. Firstly, the multi-Rees module (sometimes also referred to as the Rees modification) $M\R = \bigoplus_{t_i \ge 0}MI_1^{t_1} \dots I_s^{t_s}$ is a $G \times \ZZ^s$-graded module over the multi-Rees algebra $\R = \bigoplus_{t_i \ge 0}I_1^{t_1} \dots I_s^{t_s}$. Secondly, if $G'$ is a finitely generated abelian group and $R = S[T_1, \dots, T_r]$ is a $G \times G'$-graded polynomial extension of $S$, such that $\deg_{G \times G'}(a) = (\deg_G(a), 0)$ for any $a \in S$, then for a graded complex $\G_\bullet$ of free $R$-modules,
$$H_i\big((\G_\bullet)_{G \times \{\delta\}} \otimes_S A\big) = H_i\big(\G_\bullet \otimes_S A\big)_{G \times \{\delta\}},$$
where $(\bullet)_{G \times \{\delta\}}$ denotes the degree $G \times \{\delta\}$-strand of the corresponding complex. In particular, if $\G_\bullet$ is a $G \times G'$-graded free resolution of an $R$-module $N$, as $(\G_\bullet)_{G \times \{\delta\}}$ is a $G$-graded free resolution of the $S$-module $N_{G \times \{\delta\}}$, it follows that
$$\tor^S_i(N_{G \times \{\delta\}}, A) = H_i(\G_\bullet \otimes_S A)_{G \times \{\delta\}},$$
in which $\G_\bullet \otimes_S A$ is viewed as a $G \times G'$-graded complex of free $A[T_1, \dots, T_r]$-modules. These observations allow us to bring the problem to studying the support of graded $A[T_1, \dots, T_r]$-modules. We proceed by making use of the notion of initial submodules to reduce to the case when the module is a quotient ring obtained by a monomial ideal. The result then follows by considering the Stanley decomposition of such a quotient ring.

In its full generality, our proof of Theorem \ref{thm.arbitrarydegree2} is quite technical, so we start in Section \ref{sec.samedegree} by considering first the case when each $I_i$ is equi-generated. In fact, in this case, with the additional assumption that $A$ is a Noetherian ring, our results are much stronger; the asymptotic linearity appears clearer and the proofs are simpler. We can also examine the support of local cohomology modules of $MI_1^{t_1} \dots I_s^{t_s}$. Our results, in the case when each $I_i$ is finitely generated in a single degree $\gamma_i$, are stated as follows.

\begin{theorem}[Theorem \ref{thm.Betti}] \label{thm.intro2}
Assume that $i=0$ or $A$ is Noetherian. Then there exists a finite set $\Delta_i \subseteq G$ such that
\begin{enumerate}
\item For all $t = (t_1, \dots, t_s) \in \NN^s$, $\tor^S_i(MI_1^{t_1}\cdots I_s^{t_s},A)_\eta = 0$  if $\eta \not\in \Delta_i + t_1\gamma_1 +\cdots +t_s\gamma_s$.
\item There exists a subset $\Delta_i' \subseteq \Delta_i$ such that $\tor^S_i(MI_1^{t_1}\cdots I_s^{t_s},A)_{\eta + t_1\gamma_1 +\cdots +t_s\gamma_s}\not= 0$ for $t\gg 0$ and $\eta \in \Delta_i' $, and
$\tor^S_i(MI_1^{t_1}\cdots I_s^{t_s},A)_{\eta + t_1\gamma_1 +\cdots +t_s\gamma_s} = 0$ for $t\gg 0$ and $\eta \not\in \Delta_i' $.
\item Let $A\rightarrow k$ be a ring homomorphism to a field $k$. Then for any $\delta$ and any $j$, the function
$$
\dim_{k} \tor^A_j( \tor^S_i(MI_1^{t_1}\cdots I_s^{t_s},A)_{\delta + t_1\gamma_1 +\cdots +t_s\gamma_s},k)
$$ is polynomial in the $t_i$s for $t \gg 0$, and the function
$$
\dim_{k} \tor^S_i(MI_1^{t_1}\cdots I_s^{t_s},k)_{\delta + t_1\gamma_1 +\cdots +t_s\gamma_s}
$$
 is polynomial in the $t_i$s for $t \gg 0$.
\end{enumerate}
\end{theorem}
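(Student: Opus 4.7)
My plan is to package the family $\{MI_1^{t_1}\cdots I_s^{t_s}\}_{t\in\NN^s}$ into a single finitely generated graded module over a polynomial ring and then deduce all three statements from standard facts about multigraded Hilbert functions. Let $r_j$ denote the number of generators of $I_j$ and set $R:=S[T_{j,k}:1\leq j\leq s,\ 1\leq k\leq r_j]$, graded by $G\times\ZZ^s$ with $\deg(T_{j,k})=(\gamma_j,e_j)$ and $\deg(a)=(\deg_G a,0)$ for $a\in S$. Because each $I_j$ is equi-generated in degree $\gamma_j$, the multi-Rees module $M\R:=\bigoplus_{t\in\NN^s}MI_1^{t_1}\cdots I_s^{t_s}$ is a finitely generated $(G\times\ZZ^s)$-graded $R$-module whose $(G\times\{t\})$-strand is $MI_1^{t_1}\cdots I_s^{t_s}$. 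Under the hypothesis of the theorem, $M\R$ admits a finitely generated $(G\times\ZZ^s)$-graded free $R$-resolution $\G_\bullet$, which is also an $S$-free resolution since $R$ is $S$-free. The observation recalled in the introduction then identifies
$$N_i\;:=\;H_i(\G_\bullet\otimes_S A)\;=\;\tor^S_i(M\R,A)$$
as a finitely generated $(G\times\ZZ^s)$-graded $A[T_{j,k}]$-module whose $(G\times\{t\})$-strand is $\tor^S_i(MI_1^{t_1}\cdots I_s^{t_s},A)$.

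I would next decompose $N_i$ according to cosets of the subgroup $H\subseteq G\times\ZZ^s$ generated by $(\gamma_1,e_1),\ldots,(\gamma_s,e_s)$. Projection onto $\ZZ^s$ gives $H\simeq\ZZ^s$, and $(g,t)\mapsto g-\sum_j t_j\gamma_j$ identifies $(G\times\ZZ^s)/H$ with $G$. Since every $T_{j,k}$ has degree in $H$, the $A[T_{j,k}]$-action preserves cosets, so $N_i=\bigoplus_{\eta\in G}N_i^\eta$ splits as a direct sum of $A[T_{j,k}]$-submodules, where $N_i^\eta$ collects the components in the coset $\eta+H$. Finite generation of $N_i$ forces all but finitely many $N_i^\eta$ to vanish; taking $\Delta_i:=\{a_k-\sum_j b_{k,j}\gamma_j\}$ as $(a_k,b_k)$ runs over the degrees of a finite generating set, any element of $(N_i)_{(g,t)}$ lies in $N_i^{g-\sum_j t_j\gamma_j}$, so $(N_i)_{(g,t)}=0$ whenever $g-\sum_j t_j\gamma_j\notin\Delta_i$. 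This is (1). Via the isomorphism $H\simeq\ZZ^s$, each $N_i^\eta$ becomes a finitely generated $\ZZ^s$-graded module over the standard $\NN^s$-graded polynomial ring $A[T_{j,k}]$ (in which $T_{j,k}$ now has degree $e_j$), whose $t$-strand equals $\tor^S_i(MI_1^{t_1}\cdots I_s^{t_s},A)_{\eta+\sum_j t_j\gamma_j}$.

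For (3), I would take a finitely generated $\ZZ^s$-graded free $A[T_{j,k}]$-resolution $F_\bullet$ of $N_i^\eta$. Since $A[T_{j,k}]$ is $A$-free, each $F_p$ is also $A$-free, so $F_\bullet\otimes_A k$ is a complex of finitely generated free $k[T_{j,k}]$-modules computing $\tor^A_\bullet(N_i^\eta,k)$; hence $\tor^A_j(N_i^\eta,k)$ is a finitely generated $\ZZ^s$-graded module over the standard $\NN^s$-graded polynomial ring $k[T_{j,k}]$, and the multigraded Hilbert--Serre theorem yields the desired polynomiality of its $t$-strand dimension for $t\gg 0$. The parallel argument applied to the finitely generated $(G\times\ZZ^s)$-graded $k[T_{j,k}]$-module $H_i(\G_\bullet\otimes_S k)=\tor^S_i(M\R,k)$, decomposed along cosets of $H$, gives the second polynomiality in (3).

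For (2), I would invoke the prime filtration of $N_i^\eta$ as an $A[T_{j,k}]$-module to reduce to graded cyclic quotients $A[T_{j,k}]/\mathfrak{P}$ with $\mathfrak{P}$ a graded prime. Setting $\mathfrak{p}:=\mathfrak{P}\cap A$, each graded piece $(A[T_{j,k}]/\mathfrak{P})_t$ is torsion-free as an $A/\mathfrak{p}$-module (as a graded piece of a domain), so it vanishes iff it vanishes after tensoring with the fraction field $K$ of $A/\mathfrak{p}$; but that tensor product equals $(K[T_{j,k}]/\mathfrak{P}K[T_{j,k}])_t$, whose $K$-dimension is polynomial in $t$ for $t\gg 0$ and hence is eventually zero or eventually nonzero. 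Propagating this dichotomy through the short exact sequences of the filtration yields the same dichotomy for $(N_i^\eta)_t$, so one takes $\Delta_i':=\{\eta\in\Delta_i:(N_i^\eta)_t\neq 0\text{ for }t\gg 0\}$. I expect the main obstacle to be purely organizational: keeping the three intertwined gradings ($G$, $\ZZ^s$, and their product) straight and verifying that the coset decomposition by $H$ genuinely yields modules over a standard $\NN^s$-multigraded polynomial ring, so that classical multigraded Hilbert--Serre applies.
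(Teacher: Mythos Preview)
Your approach is essentially the paper's: both form the multi-Rees module $M\R$ as a finitely generated $(G\times\ZZ^s)$-graded module over a polynomial extension $R$ of $S$, take a graded free $R$-resolution, tensor with $A$ over $S$, and read everything off from the resulting finitely generated $A[T_{j,k}]$-module. The only cosmetic difference is that the paper shifts the grading (setting $\deg T_{i,j}=(0,e_i)$ and twisting the Rees module by $\gamma\cdot t$), so that your coset decomposition by $H=\langle(\gamma_j,e_j)\rangle$ becomes literally the decomposition by $G$-degree; that is exactly the content of Theorem~\ref{thm.samedegree}, which Theorem~\ref{thm.Betti} simply invokes after setting up the Rees module.

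The one substantive divergence is your argument for (2). The paper proves a short ``tame'' dichotomy (Lemma~\ref{tame}) directly from $H^0_{\nn}$, valid for any finitely generated $\ZZ^s$-graded module over a finitely generated $\NN^s$-graded $A$-algebra with no Noetherian hypothesis on $A$. Your route via a prime filtration of $N_i^\eta$ is correct when $A$ is Noetherian, but it does not cover the case $i=0$ with $A$ non-Noetherian that the theorem explicitly allows: without Noetherianity of $A[T_{j,k}]$ a prime filtration need not exist. Similarly, your argument for (3) needs a finitely generated free $A[T_{j,k}]$-resolution of $N_i^\eta$, which again requires $A$ Noetherian; the paper instead cites \cite[Theorem~1]{Ko1} (whose proof is close in spirit to what you wrote). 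So your plan is sound and matches the paper's strategy, but your detailed arguments for (2) and (3) are slightly heavier than needed and leave the non-Noetherian $i=0$ case unaddressed.
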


\begin{theorem}[Theorem \ref{thm.localcohomology}] \label{thm.intro3}
Let $\b$ be a $G$-homogeneous ideal in $S$ such that for any $i \ge 0$ and $\delta \in G$, $H^i_\b(S)_\delta$ is a finitely generated $A$-module. Then, if $A$ is Noetherian,  for $i \ge 0$, there exists a subset $\Lambda_i \subseteq G$ such that
\begin{enumerate}
\item $H^i_\b(MI_1^{t_1}\cdots I_s^{t_s})_{\eta  + t_1\gamma_1 +\cdots +t_s\gamma_s} \not= 0$ for $t = (t_1, \dots, t_s) \gg 0$ if only if $\eta \in \Lambda_i$.
\item Let $A\rightarrow k$ be a ring homomorphism to a field $k$. Then for any $\delta$ and any $j$, the function
$$
\dim_{k} \tor^A_j(  H^i_\b(MI_1^{t_1}\cdots I_s^{t_s})_{\delta+t_1\gamma_1 +\cdots +t_s\gamma_s},k)
$$
is a polynomial in the $t_i$s for $t \gg 0$.
\end{enumerate}
\end{theorem}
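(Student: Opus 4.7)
The plan is to parallel the proof of Theorem \ref{thm.Betti} (Theorem \ref{thm.intro2}), substituting $H^i_\b(-)$ for $\tor^S_i(-,A)$; the hypothesis that each $H^i_\b(S)_\delta$ is a finitely generated $A$-module plays the role of the finiteness input that on the $\tor$ side is automatic. As in Section \ref{sec.samedegree}, form the multi-Rees algebra $\R = \bigoplus_{t \in \NN^s} I_1^{t_1}\cdots I_s^{t_s}$ and multi-Rees module $M\R = \bigoplus_t MI_1^{t_1}\cdots I_s^{t_s}$, both $G\times \ZZ^s$-graded. Since the $I_i$ are equi-generated and $A$ is Noetherian, $\R$ is Noetherian and $M\R$ is finitely generated over $\R$. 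Present $\R$ as a quotient of $R = S[T_{i,j}]$ with $\deg T_{i,j} = (\gamma_i, e_i)$; then $M\R$ becomes a finitely generated $G\times\ZZ^s$-graded $R$-module. Introduce the grading shift $\sigma(\delta,t) = (\delta - t_1\gamma_1 - \cdots - t_s\gamma_s,\,t)$, under which each $T_{i,j}$ has bi-degree $(0,e_i)$. For $\eta\in G$ set
$$V_\eta \;:=\; \bigoplus_{t \in \NN^s} H^i_\b\bigl(MI_1^{t_1}\cdots I_s^{t_s}\bigr)_{\eta + t_1\gamma_1 +\cdots+t_s\gamma_s}.$$
Then $V_\eta$ is naturally a $\ZZ^s$-graded module over the polynomial subring $A[T_{i,j}]\subseteq R$ (graded by $\deg T_{i,j} = e_i$).

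The crucial step is to show $V_\eta$ is finitely generated over $A[T_{i,j}]$. By flat base change along $S\hookrightarrow R$, $H^p_\b(R)=H^p_\b(S)\otimes_S R$; direct bookkeeping in the shifted grading yields an isomorphism of $\ZZ^s$-graded $A[T_{i,j}]$-modules
$$\bigoplus_{t\in\NN^s} H^p_\b(R)^{\sigma}_{(\eta,t)} \;\cong\; H^p_\b(S)_\eta \otimes_A A[T_{i,j}],$$
finitely generated over $A[T_{i,j}]$ by the hypothesis on $H^p_\b(S)_\eta$. The same holds for any finitely generated bigraded free $R$-module. Choose now a $G\times\ZZ^s$-graded resolution $F_\bullet\twoheadrightarrow M\R$ by finitely generated free $R$-modules and set $K_0 = M\R$, $K_{j+1} = \ker(F_j\to K_j)$. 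The long exact sequences from $0\to K_{j+1}\to F_j\to K_j\to 0$ realise the $\eta$-strand of $H^i_\b(M\R)$ as an iterated extension of subquotients of the $\eta$-strands of $H^{i+j}_\b(F_j)$, all finitely generated over the Noetherian ring $A[T_{i,j}]$. The iteration terminates because $H^q_\b$ vanishes for $q$ above the cohomological dimension of $\b$, which is finite since $S$ is Noetherian. Hence $V_\eta$ is a finitely generated $\ZZ^s$-graded $A[T_{i,j}]$-module.

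With finite generation in hand, $V_\eta$ is a finitely generated multigraded module over $A[T_{i,j}]$ generated in $\ZZ^s$-degrees $e_1,\dots,e_s$, placing us in exactly the setting underlying Theorem \ref{thm.Betti}. Part (1) follows from the asymptotic $\NN^s$-support structure of such modules, which determines $\Lambda_i := \{\eta \in G : (V_\eta)_t \neq 0 \text{ for all } t \gg 0\}$. Part (2) follows from the same Stanley-decomposition / initial-module argument used for Part (3) of Theorem \ref{thm.Betti}: after reduction to the initial module, $V_\eta$ decomposes as a finite direct sum of free modules over subalgebras of $A[T_{i,j}]$, which forces $\dim_k\tor^A_j((V_\eta)_t,k)$ to agree with a polynomial in $t$ for $t\gg 0$. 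The principal obstacle is the finite-generation step for $V_\eta$: local cohomology does not preserve finite generation in general, and the argument hinges essentially on combining the hypothesis on $H^i_\b(S)_\delta$, flat base change, and termination via the cohomological dimension of $\b$.
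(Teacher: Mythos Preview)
Your approach is essentially the paper's. Both pass to the (shifted) multi-Rees module $M\R$ as a finitely generated $G\times\ZZ^s$-graded $R$-module and reduce everything to showing that the degree-$(\eta,*)$ strand of $H^i_{\b R}(M\R)$ is a finitely generated module over $B=A[T_{i,j}]$. Your iterated long-exact-sequence argument via syzygies $K_j$, terminated by the cohomological-dimension bound, is just an unwound version of the paper's one-line spectral sequence $H^i_{\b R}(\F_j)\Rightarrow H^{i-j}_{\b R}(M\R)$; in both cases flat base change plus the hypothesis on $H^i_\b(S)_\delta$ handles the free terms $\F_j$. Part (1) then follows exactly as in the paper, via Lemma \ref{tame} (equivalently, your $\Lambda_i$ is the paper's $\{\delta : K_\delta\neq H^0_\a(K_\delta)\}$).

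There is, however, a genuine gap in your justification of Part (2). The ``Stanley-decomposition / initial-module argument'' is \emph{not} what proves Part (3) of Theorem \ref{thm.Betti}, and it cannot do the job here: passing to an initial submodule preserves $\supp_G$ (Proposition \ref{lem.initial}) but alters the $A$-module structure of each graded piece, so it says nothing about $\dim_k\tor^A_j((V_\eta)_t,k)$. A Stanley decomposition of the initial module would make every graded piece $A$-free, whence all higher $\tor^A_j$ vanish---which is certainly not true of the original $V_\eta$. The paper instead invokes \cite[Theorem 1]{Ko1}: for a finitely generated $\ZZ^s$-graded module over a standard-$\NN^s$-graded polynomial ring with Noetherian base $A$, the function $t\mapsto\dim_k\tor^A_j(M_t,k)$ is eventually a polynomial. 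Once you have finite generation of $V_\eta$ over $B$ (which you do), this is exactly the result needed; replace your last sentence with a citation to Kodiyalam and the proof is complete.
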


In the simplest scenario, when $S$ is a standard graded polynomial ring over a field $k$, $\m \subseteq S$ the homogeneous maximal $S$-ideal, $s=1$, and $I \subseteq S$ is a homogeneous ideal generated in degree $d$, Theorems \ref{thm.intro2} and \ref{thm.intro3} give the following result.

\begin{corollary} \label{cor.intro4}
For $i \ge 0$, there exist $t_i$   and finite sets $\Delta_i' \subseteq \Delta_i \subseteq \ZZ$ such that
\begin{enumerate}
\item for all $t \in \NN$, $\tor^S_i(MI^t, k)_{\eta +td} = 0$ if $\eta \not\in \Delta_i$;
\item for $t \geq t_i$, $\tor^S_i(MI^t,k)_{\eta+td} \not= 0$ if and only if $\eta \in \Delta_i'$;
\item for any $\eta \in \ZZ$, the function $\dim_k \tor^S_i(MI^t,k)_{\eta + td}$ is a polynomial in $t$ for $t \geq t_i$;
\item for any $\theta \in \ZZ$, the function $\dim_k H^i_\m (MI^t)_{\theta+td}$ is a polynomial in $t$, for $t \gg 0$.
\end{enumerate}
\end{corollary}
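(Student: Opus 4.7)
The plan is to derive all four statements as direct specializations of Theorems \ref{thm.intro2} and \ref{thm.intro3} with $G = \ZZ$, $s = 1$, $\gamma_1 = d$, and $A = k$ (trivially Noetherian, being a field). Since every hypothesis of those theorems about $A$ is automatically satisfied in this setting, the corollary should essentially read off from them.

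First, parts (1) and (2) are exactly parts (1) and (2) of Theorem \ref{thm.intro2} in this single-ideal, single-degree setting: the finite sets $\Delta_i, \Delta_i' \subseteq \ZZ$ and the stabilization index $t_i$ are the ones provided (explicitly or implicitly through the ``for $t \gg 0$'' quantifier) by that theorem. For part (3), I would exploit that over a field $k$, $\tor^k_j(V,k) = V$ when $j = 0$ and vanishes otherwise; the $j = 0$ case of Theorem \ref{thm.intro2}(3) then gives exactly the polynomiality of $\dim_k \tor^S_i(MI^t,k)_{\eta + td}$ for $t \geq t_i$, as required.

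For part (4), I would apply Theorem \ref{thm.intro3} to $\b = \m$, the graded maximal ideal of $S = k[x_1,\dots,x_n]$. The only hypothesis to verify is that each graded piece $H^i_\m(S)_\delta$ is a finitely generated $k$-module; this is standard, since $H^i_\m(S) = 0$ for $i \ne n$ while $H^n_\m(S)_\delta$ has finite $k$-dimension $\binom{-\delta - 1}{n-1}$ for $\delta \leq -n$ and is zero otherwise. Once this is in place, the $j = 0$ specialization of Theorem \ref{thm.intro3}(2) yields part (4).

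There is no genuine obstacle here: the corollary is fully absorbed in the two general theorems, and the only non-cosmetic ingredient in the specialization is the classical finiteness of $\dim_k H^i_\m(S)_\delta$ used to meet the hypothesis of Theorem \ref{thm.intro3}. The real work lies upstream, in establishing Theorems \ref{thm.intro2} and \ref{thm.intro3}.
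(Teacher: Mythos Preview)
Your proposal is correct and matches the paper's approach: the paper gives no separate proof of this corollary, treating it as an immediate specialization of Theorems \ref{thm.intro2} and \ref{thm.intro3} to the case $G=\ZZ$, $s=1$, $A=k$, $\gamma_1=d$, $\b=\m$. Your verification that $H^i_\m(S)_\delta$ is a finite-dimensional $k$-vector space is exactly the one check needed to invoke Theorem \ref{thm.intro3}, and your observation that $\tor^k_0(-,k)=\mathrm{id}$ handles parts (3) and (4); for (3) you could also cite the second displayed function in Theorem \ref{thm.intro2}(3) directly, and note that a single threshold $t_i$ works for all $\eta$ since the function vanishes identically for $\eta\notin\Delta_i$ by (1) and $\Delta_i$ is finite.
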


While writing this paper, we were informed by Whieldon that in her recent work \cite{Wh}, a similar result to Corollary \ref{cor.intro4} (1)--(3) is proved.
We later learned that Pooja Singla  also proved these results, in the second chapter of her thesis \cite{Si}, independently. She also shows in
\cite{Si} that if $I$ is graded ideal, then for any $a,b\in\ZZ$,
 $\dim_k \tor^S_i(I^t,k)_{a+bt}$ is a quasi-polynomial in $t$ for $t\gg 0$, and give results describing the regularity of $I_1^{t_1}\cdots
 I_s^{t_s}$ for $t\gg 0$.

In the general setting, when $A$ is not necessarily a field and $S$ is not necessarily standard graded, the problem is more subtle, and requires more work and a different approach.

We choose not to restrict to a Noetherian base ring in all our results as it seemed to us that it makes the presentation more clear to put this hypothesis only in the statements where it is of use in the proof.

\noindent{\bf Acknowledgement.} Our work was done when the third named author
visited the other authors at the Institut de Math\'ematiques de Jussieu, UPMC.
The authors would like to thank the Institut for its support and hospitality.
The third named author is partially supported by NSA grant H98230-11-1-0165.


\section{Preliminaries} \label{sec.prel}

In this section, we collect necessary notations and terminology used in the paper, and prove a few auxiliary results. For basic facts in commutative algebra, we refer the reader to \cite{Ei,Mat}.

From now on, $G$ denotes a finitely generated abelian group, $S = A[x_1, \dots, x_n]$ is a $G$-graded algebra over a commutative ring with identity,  $A\subseteq S_0$ and $M$ is a $G$-graded $S$-module. By abusing notation, we shall use $0$ to denote the identity of all abelian groups considered in the paper; the particular group will be understood from the context of its use.

\begin{definition} Let $E \subseteq G$ be a collection of elements in $G$. We say that $E$ is a {\it linearly independent} subset of $G$ if $E$ forms a basis for a free submonoid of $G$.
\end{definition}

\begin{definition}
The {\it support} of $M$ in $G$ is defined to be
$$\supp_G(M) = \{ \gamma \in G ~\big|~ M_\gamma \not= 0\}.$$
\end{definition}

\begin{remark}
When $A$ is a field, let $\F_\bullet$ be a minimal $G$-graded free resolution of $M$ over $S$, where
$$\F_i = \bigoplus_{\eta \in G} S(-\eta)^{\beta^i_\eta(M)}.$$
The numbers $\beta^i_\eta(M)$ are called the {\it multigraded (or $G$-graded) Betti numbers} of $M$ and
$$\beta^i_\eta(M) = \dim_A \tor^S_i(M,A)_\eta$$ as, by definition, the maps in $\F_\bullet\otimes_S A$ are zero maps.
\end{remark}

More generally, we shall prove the following lemma relating the multigraded Betti numbers of $M$ and the support of $\tor^S_*(M,A)$.

\begin{lemma}
Let $\F_\bullet$ be a $G$-graded free resolution of a $G$-graded $S$-module $M$. Then

(1)  $\F_i$ has a summand $S(-\gamma)$ for any $\gamma \in \supp_G (\tor^S_i(M,A))$.

(2)  Assume that there exists $\phi\in\Hom_{\ZZ}(G,\RR )$ such that $\phi (\deg (x_i))>0$ for all $i$ and
$M$ is finitely generated. Then there exists a $G$-graded free resolution $\F'_\bullet$ of $M$ such that
$$
\F'_i=\bigoplus_{\ell\in E'_i}S(-\gamma_{i,\ell} )\quad \hbox{with}\quad \gamma_{i,\ell} \in \bigcup_{j\leq i}\supp_G (\tor^S_j(M,A)),\ \forall \ell .
$$
\end{lemma}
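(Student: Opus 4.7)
\emph{Proof sketch.} Part (1) is immediate. Writing $\F_i = \bigoplus_{\eta \in G} S(-\eta)^{\beta_\eta}$, one has $\F_i \otimes_S A = \bigoplus_{\eta} A(-\eta)^{\beta_\eta}$; since $A$ is concentrated in degree $0$ as a $G$-graded $S$-module, the $\gamma$-strand collapses to $A^{\beta_\gamma}$. As $\tor^S_i(M,A)_\gamma = H_i(\F_\bullet \otimes_S A)_\gamma$ is a subquotient of $(\F_i \otimes_S A)_\gamma$, its nonvanishing forces $\beta_\gamma > 0$, so $S(-\gamma)$ is a summand of $\F_i$.

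For part (2), my plan is to build $\F'_\bullet$ recursively, leveraging a graded Nakayama lemma made available by the positivity of $\phi$. The form I would establish is: a $G$-graded $S$-module $N$ whose support is $\phi$-bounded below and which satisfies $\m N = N$ must vanish. The argument is a short iteration --- if $c$ denotes the infimum of $\phi$ on $\supp_G N$ and $\epsilon := \min_i \phi(\deg x_i) > 0$, then writing any nonzero $n \in N_\gamma$ as $\sum_i x_i n_i$ with some $n_i \ne 0$ forces $\phi(\gamma) \ge c + \epsilon$, contradicting the definition of $c$ unless $N = 0$. The standard consequence is that any homogeneous $A$-generating set of $N/\m N$ lifts to an $S$-generating set of $N$.

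Now for the recursion. Setting $\F'_{-1} := M$, the base case $\F'_0 \twoheadrightarrow M$ is obtained by lifting homogeneous $A$-generators of $M/\m M$ (finitely generated over $A$ since $M$ is finitely generated over $S$), so that every summand is of the form $S(-\gamma)$ with $\gamma \in \supp_G(\tor^S_0(M,A))$. Having built $\F'_0, \ldots, \F'_{i-1}$ with the required summand property, I set $Z_i := \ker(\F'_{i-1} \to \F'_{i-2})$ and use the short exact sequences $0 \to Z_j \to \F'_{j-1} \to Z_{j-1} \to 0$ (with $Z_0 := M$) together with freeness of each $\F'_j$ to obtain, by dimension shifting, $\tor^S_1(Z_{i-1},A) \cong \tor^S_i(M,A)$ and hence the exact sequence
\[
0 \to \tor^S_i(M,A) \to Z_i \otimes_S A \to \F'_{i-1} \otimes_S A.
\]
This yields
\[
\supp_G(Z_i/\m Z_i) \subseteq \supp_G(\tor^S_i(M,A)) \cup \supp_G(\F'_{i-1}/\m \F'_{i-1}) \subseteq \bigcup_{k \le i} \supp_G(\tor^S_k(M,A))
\]
by the inductive hypothesis on the summands of $\F'_{i-1}$. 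Lifting homogeneous $A$-generators of $Z_i/\m Z_i$ via the Nakayama corollary then produces $\F'_i \twoheadrightarrow Z_i$ with summands in the claimed support.

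The principal technical point --- and the place where care is needed --- is verifying that the Nakayama lemma applies at each step, i.e., that each $Z_i$ is $\phi$-bounded below. I would handle this by a parallel induction: $M$ is $\phi$-bounded below because it has finitely many homogeneous generators, so $\F'_0$ is as well; and inductively the summand degrees of $\F'_{i-1}$ lie in $\bigcup_{k \le i-1} \supp_G(\tor^S_k(M,A))$, a finite union of $\phi$-bounded-below sets by the induction, so $\F'_{i-1}$ and its submodule $Z_i$ both inherit a common $\phi$-lower bound.
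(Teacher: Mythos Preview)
Your proof is correct and follows essentially the same strategy as the paper's. Both arguments hinge on the same graded Nakayama principle made available by the positivity of $\phi$, build the resolution recursively, and bound $\supp_G(Z_i\otimes_S A)$ via the exact sequence coming from $0\to Z_i\to \F'_{i-1}\to Z_{i-1}\to 0$ together with dimension shifting; the paper's direct surjectivity argument for $\psi'$ (choosing $m\in M_\nu\setminus M'_\nu$ with $\phi(\nu)$ close to the infimum) is exactly your Nakayama contradiction unwound in place. Two cosmetic differences: for (1) you give the immediate subquotient argument while the paper phrases it inductively, and for the $\phi$-boundedness in (2) the paper simply records that the boundedness condition passes to submodules, which is a slightly more direct bookkeeping than routing through $\phi$-boundedness of each $\supp_G(\tor^S_k(M,A))$ --- but your version is also valid once one observes $\tor^S_k(M,A)$ is a subquotient of $\F'_k\otimes_S A$.
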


Notice that, without further restrictions
on $A$ and/or $M$ one cannot in general choose $\F'_i$ so that $\gamma_{i,\ell} \in \supp_G (\tor^S_i(M,A)),\ \forall \ell $.

\begin{proof}
For (1), let $K$ be defined by the exact sequence
$$
0\ra K\ra \F_0 \ra M\ra 0
$$
and notice that $\F_0 \otimes_S A\ra M\otimes_S A$ is onto. As $(\F_0 \otimes_S A)_\gamma \not=0$ if
and only if
$S(-\gamma )$ is a direct summand of $\F_0$, the result holds for $i=0$. Furthermore, $\cdots \ra \F_1\ra K\ra 0$
is a resolution of $K$, $\tor_1^S(M,A)$ is a graded submodule of $K\otimes_S A$ and $\tor_i^S(M,A)\simeq \tor_{i-1}^S(K,A)$ for
$i\geq 2$, which implies the result by induction on $i$.

To prove the second statement, we relax the finite generation of $M$ by the following condition, which will enable us
to make induction :  $\exists q\in \RR,\ \phi (\deg a)>q,\ \forall a\in M$. Notice that if a module satisfies this condition,
any of its submodules satisfies the same condition.

Set $T_j:=\supp_G (\tor^S_j(M,A))$. Let  $\psi :\F_0 =\bigoplus_{\ell\in E_0}S(-\gamma_{0,\ell} )\ra M$ be the augmentation and  $E'_0:=\{ \ell\in E_0\ \vert\ \gamma_{0,\ell}\in T_0\}$. Denote by
 $\psi '$  the restriction of $\psi$ to $\F'_0:=\bigoplus_{\ell\in E'_0}S(-\gamma_{0,\ell} )$.

We now prove that $\psi '$ is onto. First notice that $\psi'\otimes_S A$ is surjective. Assume that $\psi'$ is not surjective, let $M'$ be the image of $\psi'$ and
 $$
 h:=\inf\{ \phi (\gamma), M_\gamma \not= M'_\gamma \}\geq \inf\{ \phi (\deg (a)),\ a\in M\} \in \RR.
 $$


Set $\epsilon :=\min\{ \phi (\deg (x_i)),\ i=1,\ldots ,n\} >0$ and choose $m\in M_\nu \setminus M'_\nu$ for
some $\nu$ with $h\leq \phi (\nu )<h+\epsilon$. As  $\psi'\otimes_S A$ is onto, there exists $m'\in M'$ such that $m$ is of the form $m'+\sum_{i=1}^n m_ix_i$. Now, for some value $i$ we have $m_i\not\in M'$. It then follows that $\phi(\deg (m_i)) =  \phi(\nu) - \phi (\deg(x_i)) < h$, contradicting the definition of $h$.

We will now prove (2) by induction on $i$. To end this, assume that there exists a graded complex
$$
0\ra \F'_i\ra \cdots \ra \F'_0\ra M\ra 0
$$
with at most non zero homology in homological degree $i\geq 0$, and such that $\F'_i=\bigoplus_{\ell\in E'_i}S(-\gamma_{i,\ell} )$ with $\gamma_{i,\ell} \in \bigcup_{j\leq i}T_j$.

If the complex is exact, our claim is proved; otherwise, by setting $K_i\subset  \F'_i$ for the $i$-th homology module
of the complex and $Q_i:=\ker (\F'_i\otimes_S A \ra \F'_{i-1}\otimes_S A)$, one has
$$
\supp_G (K_i \otimes_S A) =\supp_G ( \tor_{i+1}^S (M, A))\cup \supp_G (\ker (Q_i\ra  \tor_{i}^S (M, A))).
$$

Applying the argument above to a graded onto map $\F \ra K_i$, and using that by induction
$$
\supp_G (K_i \otimes_S A) \subseteq T_{i+1}\cup \supp_G (\F'_{i}\otimes_S A)\subseteq \cup_{j\leq i+1}T_{j}
$$
one obtains a graded free $S$-module $\F'_{i+1}$
as claimed mapping onto $K_i$.
\end{proof}

Let $t = (t_1, \dots, t_s) \in \ZZ^s$. We shall write $t \ge 0$ (respectively, $t > 0$, $t \le 0$ and $t \not= 0$) if $t_i \ge 0$ (respectively, $t_i > 0, t_i \le 0$ and $t_i \not= 0$) for all $i = 1, \dots, s$. For a property that depends on a parameter $t\in \ZZ^s$, one says that {\it the property holds for $t\gg 0$} if there exists $t_0\in \ZZ^s$ such that it holds for $t\in t_0+\NN^s$. The following semi-classical lemma will be of use.

\begin{lemma}\label{tame}
Let $R$ be a finitely generated $\NN^s$-graded algebra over a commutative ring $A$. Let $M$ be a finitely generated $\ZZ^s$-graded $R$-module. Then either $M_t=0$ for $t\gg 0$ or $M_t\not= 0$ for $t\gg 0$.
\end{lemma}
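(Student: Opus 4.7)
The plan is to reduce the claim to the case when $M$ is cyclic, and then exploit the multiplicative structure of the $\NN^s$-grading on $R$.

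First I would pick homogeneous generators $m_1, \ldots, m_\ell$ of $M$ of degrees $\beta_1, \ldots, \beta_\ell$, and observe that $M_t = \sum_{j=1}^\ell (Rm_j)_t$ as $A$-submodules of $M_t$; consequently $M_t = 0$ if and only if $(Rm_j)_t = 0$ for every $j$. Since each cyclic submodule $Rm_j$ is isomorphic to a shifted cyclic quotient $(R/\mathrm{Ann}_R(m_j))(-\beta_j)$, the dichotomy for each such quotient implies the dichotomy for $M$: if every cyclic piece is eventually zero then $M_t = 0$ for $t$ in the coordinate-wise maximum of the individual witnesses translated by $\NN^s$; and if some piece is eventually nonzero, then $M_t$ contains this nonzero piece on the same shifted cone. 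This reduces the problem to the case $M = R/I$ for a homogeneous ideal $I$.

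Next, I would consider the zero set $Z := \{t \in \NN^s : R_t \subseteq I\} = \{t \in \NN^s : (R/I)_t = 0\}$. In the setting relevant to this paper (the multi-Rees construction) $R$ is standard $\NN^s$-graded, meaning it is generated over $R_0$ by elements in degrees $e_1, \ldots, e_s$, so that $R_t \cdot R_u = R_{t+u}$ for all $t, u \in \NN^s$. The key consequence is that $Z$ is absorbent under translation: if $t \in Z$ then, for any $u \in \NN^s$, $R_{t+u} = R_t R_u \subseteq I$, giving $t + u \in Z$. The dichotomy is then immediate: if $Z = \emptyset$, then $(R/I)_t \ne 0$ for every $t \in \NN^s$ and $M_t \ne 0$ for $t \gg 0$ with $t_0 = 0$; if $Z$ is nonempty, any $t_0 \in Z$ gives $t_0 + \NN^s \subseteq Z$, so $M_t = 0$ for $t \gg 0$.

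The hardest part will be justifying the identity $R_t \cdot R_u = R_{t+u}$, which requires $R$ to be standardly multi-graded (so that graded pieces in any degree factor as products of pieces in the standard basis directions). This holds in all the applications the paper needs. For a genuinely non-standard $\NN^s$-grading the direct conclusion can fail, and one would need either to pass to a Veronese subring of $R$ on which the grading becomes standard, or to replace $\NN^s$ throughout the statement by the submonoid of $\NN^s$ generated by the degrees of the generators of $R$.
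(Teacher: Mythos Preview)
Your argument is correct under the standard-grading hypothesis you isolate, and you are right to flag it: the lemma as literally stated is false for non-standard $\NN^s$-gradings (take $R=A[x]$ with $\deg x=2$ and $M=R$; then $M_t=0$ for odd $t$ and $M_t\ne 0$ for even $t$). The paper's own proof carries the same hidden assumption. In the case $M\ne H^0_{\nn}(M)$, the assertion that $M_t\ne 0$ for every $t\in t_0+\NN^s$ amounts to saying $R_u\cdot a\ne 0$ for every $u\in\NN^s$ when $a$ is not $\nn$-torsion; deducing this from $\nn^N a\ne 0$ for all $N$ uses $R_{u+v}=R_uR_v$. In every application in the paper the relevant ring is $B=A[T_{i,j}]$ with $\deg T_{i,j}=e_i$, so no harm is done, and your Veronese remark is the standard fix in general.

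Your route differs in organization from the paper's. You reduce to cyclic modules $R/I$ and argue with the absorbent set $Z=\{t:R_t\subseteq I\}$, whereas the paper splits directly on whether $M=H^0_{\nn}(M)$ for $\nn=\bigoplus_{t>0}R_t$. The two are close in spirit: your ``$Z$ nonempty'' branch is exactly the paper's $\nn$-torsion case unpacked on a single generator, and your ``$Z$ empty'' branch corresponds to exhibiting a non-$\nn$-torsion element. Your version is more elementary in that it avoids local-cohomology language altogether; the paper's version is slightly slicker in that it bypasses the explicit reduction to cyclic modules. Either way, the identity $R_tR_u=R_{t+u}$ is the load-bearing step.
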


\begin{proof}
Let $\nn = \bigoplus_{t_i \ge 1} R_{(t_1, \dots, t_s)}$ be the ideal generated by elements of strictly positive degrees.
If $M\not= H^0_\nn (M)$ one has $M_t\not= 0$ for any $t\in t_0+\NN^s$, where $t_0$ is the degree
of a non zero element in $M/H^0_\nn (M)$.

If $M=H^0_\nn (M)$ then any generator $a$ of $M$ spans a submodule of $M$ that is zero
in degrees $\deg (a)+b_a (1,\ldots ,1)+{\NN^s}$, where $b_a$ is such that
for any product $p$ of $b_a$ elements among  the finitely many generators of the $R$-ideal $\nn$,
one has $pa=0$.
As $M$ is finitely generated, the result follows.
\end{proof}

We shall make use of the notion of {\it initial modules} with respect to a monomial order. This is a natural extension of the familiar notion of initial ideals in a polynomial ring. Let $F$ be a free $S$-module. We can write $F = \bigoplus_{i \in I}Se_i$. A {\it monomial} in $F$ is of the form $\x^{\alpha}e_i$, where $\x^\alpha$ is a monomial in $S$ and $i \in I$. A {\it monomial order} in $F$ is a total order, say $\prec$, on the monomials of $F$ satisfying the following condition: \newline
\hspace*{6ex}
if $u \prec v$ and $w\not= 1$ is a monomial in $S$ then $u\prec uw \prec vw$.

It can be seen that $\prec$ is a well ordering, i.e., every non-empty subset of the monomials in $F$ has a minimal element.
We refer the reader to \cite[15.2]{Ei} for more details on monomial orders on free modules.

\begin{definition}
Let $F$ be a free $S$-module, and let $K$ be an $S$-submodule of $F$. Let $\prec$ be a monomial order in $F$. The {\it initial module} of $K$, denoted by $\operatorname{in}_\prec(K)$, is defined to be the $S$-submodule of $F$ generated by
$$\{\x^{\alpha} e_i ~\big|~ \exists f = \x^{\alpha} e_i + \text{ smaller terms } \in K\}.$$
\end{definition}

\begin{proposition} \label{lem.initial}
Let $F$ be a free $G$-graded $S$-module, and let $K$ be a $G$-graded $S$-submodule of $F$. Let $\prec$ be a monomial order in $F$. Then $\operatorname{in}_\prec(K)$ is a $G$-graded $S$-module of $F$, and
$$\supp_G(F/K) = \supp_G(F/\operatorname{in}_\prec(K)).$$
\end{proposition}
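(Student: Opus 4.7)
The plan is to compare $K$ and $\operatorname{in}_\prec(K)$ one $G$-degree at a time. The $G$-gradedness of $\operatorname{in}_\prec(K)$ is immediate, since its generating set consists of the monomials $\x^\alpha e_i$, each of which is a $G$-homogeneous element of $F$. A preliminary observation used below is that every such generator admits a $G$-homogeneous witness: if $f=\x^\alpha e_i+(\text{lower terms})$ lies in $K$, then the $G$-homogeneous component of $f$ in degree $\delta:=\deg(\x^\alpha e_i)$ still lies in $K$ (by $G$-gradedness of $K$) and still has leading term $\x^\alpha e_i$, because any term of $f$ of $G$-degree $\delta$ other than $\x^\alpha e_i$ is by hypothesis $\prec$-smaller than it.

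For the support equality, I reduce to showing, for each $\gamma\in G$, that $F_\gamma=K_\gamma$ if and only if $F_\gamma=\operatorname{in}_\prec(K)_\gamma$. Let $\mathcal{M}_\gamma$ denote the set of monomials of $F$ of $G$-degree $\gamma$; since $F_\gamma$ is the free $A$-module with basis $\mathcal{M}_\gamma$, each of the two equalities is equivalent to the statement that every $m\in\mathcal{M}_\gamma$ lies in $K$ (resp.\ in $\operatorname{in}_\prec(K)$). The direction $(\Rightarrow)$ is immediate: if $m\in\mathcal{M}_\gamma\subseteq K$, then $m$ is itself an element of $K$ whose leading term is $m$, so $m$ is a generator of $\operatorname{in}_\prec(K)$.

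For the converse, I would use a well-ordered induction on $\prec$ restricted to $\mathcal{M}_\gamma$, which is well-ordered since $\prec$ is a well-order on the monomials of $F$. Fix $m\in\mathcal{M}_\gamma\cap\operatorname{in}_\prec(K)$ and assume every $m'\in\mathcal{M}_\gamma$ with $m'\prec m$ already lies in $K$. Since $\operatorname{in}_\prec(K)$ is generated by monomials and the monomials of $F$ form a free $A$-basis of $F$, any monomial in $\operatorname{in}_\prec(K)$ is a scalar-free multiple $\x^\beta m_0$ of one of the monomial generators $m_0$. Choosing a $G$-homogeneous witness $g\in K$ of degree $\deg(m_0)$ as in paragraph one, with $g=m_0+(\text{lower terms})$, the product $\x^\beta g\in K$ is $G$-homogeneous of degree $\gamma$, has leading term $m$, and all its remaining terms are $A$-multiples of monomials in $\mathcal{M}_\gamma$ strictly $\prec$-smaller than $m$. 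By the inductive hypothesis those remaining monomials lie in $K$, hence so do their $A$-linear combinations, and therefore $m=\x^\beta g-(\text{remaining terms})\in K$.

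The two delicate points are: (i) the induction runs over a possibly infinite well-ordered set, because $\mathcal{M}_\gamma$ need not be finite when the $x_i$ do not all have ``positive'' $G$-degree, so what powers the induction is the well-ordering property of $\prec$ rather than any Noetherian termination; and (ii) the identification of any monomial in a monomial submodule as a scalar-free multiple of a monomial generator, which remains valid over an arbitrary commutative ring $A$ precisely because the monomials $\x^\alpha e_i$ form a free $A$-basis of $F$ and are therefore $A$-linearly independent.
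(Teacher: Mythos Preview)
Your proof is correct and follows essentially the same route as the paper's own argument: both reduce to the degree-by-degree equivalence $K_\mu = F_\mu \Leftrightarrow \operatorname{in}_\prec(K)_\mu = F_\mu$, and for the nontrivial direction both exploit the well-ordering of $\prec$---you via transfinite induction on $\mathcal{M}_\gamma$, the paper via the equivalent ``smallest counterexample'' formulation. Your version is more explicit about producing a $G$-homogeneous witness $f\in K$ with the prescribed leading monomial (by writing $m=\x^\beta m_0$ and multiplying up a witness for $m_0$), whereas the paper simply asserts such an $f$ exists; otherwise the arguments are the same.
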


\begin{proof} It is clear from the definition that $\operatorname{in}_\prec(K)$ is a $G$-graded $S$-module. To prove the proposition, we need to show that for any $\mu \in G$, $K_\mu = F_\mu$ if and only if $\operatorname{in}_\prec(K)_\mu = F_\mu$.

Clearly, if $K_\mu = F_\mu$ then all monomials of degree $\mu$ in $F$ are elements of $K$, and thus, are elements of $\operatorname{in}_\prec(K)$. Therefore, in this case, $\operatorname{in}_\prec(K)_\mu = F_\mu.$

Suppose now that $\operatorname{in}_\prec(K)_\mu = F_\mu.$ Let $\x^{\alpha}e_i$ be the smallest monomial of degree $\mu$ in $F$ but not in $K$, if it exists. Then $\x^{\alpha}e_i \in \operatorname{in}_\prec(K)_\mu$. Thus, there exists an element $f \in K$ of the form $$f = \x^{\alpha}e_i + g,$$
where $g$ consists of monomials that are smaller than $\x^{\alpha}e_i$ with respect to $\prec$. Since $K$ is a $G$-graded $S$-module, we can choose $f$ to be $G$-homogeneous of degree $\mu$. That is, all its monomials are of degree $\mu$. This implies that all monomials, and thus, all terms in $g$ are elements in $K$. In particular, $g \in K$. Therefore, $\x^{\alpha}e_i = f-g \in K$, a contradiction. Hence, $F_\mu = K_\mu$. The proposition is proved.
\end{proof}

One of our techniques is to take the collection of elements of certain degree from a complex. This construction gives what we shall call {\it strands} of the complex.

\begin{definition} Let $\F_\bullet$ be a $G$-graded complex of $S$-modules and let $\Gamma \subseteq G$. The {\it $\Gamma$-strand} of $\F_\bullet$, often denoted by $(\F_\bullet)_\Gamma$, is obtained by taking elements of degrees belonging to $\Gamma$ in $\F_\bullet$ and the boundary maps between these elements (since the complex is graded, the boundary maps are of degree 0). In particular, if $F = \bigoplus_{\gamma \in G} F_\gamma$ is a $G$-graded $S$-module, then $F_\Gamma := \bigoplus_{\gamma \in \Gamma} F_\gamma$.
Note that the degree $\Gamma$-strand of a complex/module is not necessarily a complex/module over $S$.
\end{definition}


\section{Forms of the same degree} \label{sec.samedegree}

In this section, we consider the case when every ideal $I_i$ is generated in a single degree. That is, when $\deg_G(f_{i,j}) = \gamma_i$ for all $j$.
We keep the notations of Section \ref{sec.prel}.

Let $G'$ denote a finitely generated abelian group. The following result, with $G'=\ZZ^s$, will be a key ingredient of our proof.

\begin{theorem} \label{thm.samedegree}
Let $R=S[T_1,\ldots ,T_r]$ be a $G\times G'$-graded polynomial extension of $S$ with $\deg_{G \times G'}(a) \in G\times 0$ for all $a\in S$ and $\deg_{G \times G'}(T_j)\in 0\times G'$ for all $j$.
Let $\M$ be a finitely generated $G \times G'$-graded $R$-module and let $i$ be an integer. Assume that $i=0$ or $A$ is a Noetherian ring. Then
\begin{enumerate}
\item There exists a finite subset $\Delta_i \subseteq G$ such that,
for any $t$,  $\tor^S_i(\M_{(*,t)}, A)_\delta = 0$ for all $\delta \not\in \Delta_i$.
\item Assume that $G' = \ZZ^s$. For $\delta \in \Delta_i$, $\tor^S_i(\M_{(*,t)}, A)_\delta = 0$ for $t\gg 0$ or $\tor^S_i(\M_{(*,t)}, A)_\delta \not=  0$ for $t\gg 0$.
If, furthermore, $A \rightarrow k$ is a ring homomorphism to a field $k$, then for any $j$, the function
$$
\dim_{k} \tor^A_j( \tor^S_i(\M_{(*,t)},A)_\delta, k)
$$ is polynomial in the $t_i$s for $t \gg 0$, and the function
$$
\dim_{k} \tor^S_i(\M_{(*,t)},k)_\delta
$$
 is polynomial in the $t_i$s for $t \gg 0$.
\end{enumerate}
\end{theorem}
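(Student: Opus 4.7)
The plan is to package the entire family $\{\M_{(*,t)}\}_{t}$ as graded strands of a single finitely generated multigraded module over the Noetherian polynomial ring $A[T_1,\ldots,T_r]$. Since $R$ is Noetherian (for $i\geq 1$, as $A$ is), choose a $G\times G'$-graded resolution $\G_\bullet \to \M$ by finitely generated free $R$-modules. Using the key observation recalled in the introduction, $(\G_\bullet)_{G\times\{t\}}$ is a $G$-graded free $S$-resolution of $\M_{(*,t)}$, so
\[
\tor^S_i(\M_{(*,t)},A) \;=\; H_i(\G_\bullet \otimes_S A)_{G\times\{t\}}.
\]
Because $R\otimes_S A = A[T_1,\ldots,T_r]$, the complex $\G_\bullet \otimes_S A$ consists of finitely generated free $A[T_1,\ldots,T_r]$-modules, so its homology $H_i := H_i(\G_\bullet \otimes_S A)$ is a finitely generated $G\times G'$-graded $A[T_1,\ldots,T_r]$-module.

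For part~(1), $A[T_1,\ldots,T_r]$ sits entirely in $G$-degree $0$, so multiplication preserves $G$-degree and $\supp_G(H_i)$ is contained in the finite set of $G$-degrees of any set of $G\times G'$-homogeneous generators of $H_i$. Taking $\Delta_i := \supp_G(H_i)$, one has $\tor^S_i(\M_{(*,t)},A)_\delta = (H_i)_{(\delta,t)} = 0$ for every $\delta\notin\Delta_i$. When $i=0$, the identical argument applied directly to the finitely generated $A[T_1,\ldots,T_r]$-module $\M\otimes_S A$ avoids using that $A$ is Noetherian.

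For part~(2) take $G' = \ZZ^s$ and fix $\delta\in\Delta_i$. The strand $N := (H_i)_{(\delta,*)}$ is a finitely generated $\ZZ^s$-graded $A[T_1,\ldots,T_r]$-module: a generating set is obtained by keeping those among a finite set of $G\times\ZZ^s$-homogeneous generators of $H_i$ whose $G$-degree equals $\delta$. Lemma~\ref{tame} then yields that $\tor^S_i(\M_{(*,t)},A)_\delta = N_t$ either vanishes for $t\gg 0$ or is nonzero for $t\gg 0$, provided the degrees of the $T_j$ lie in $\NN^s$, as is the case in the intended Rees-algebra applications. For the polynomial assertions, pick a resolution $\F_\bullet \to N$ by finitely generated free $A[T_1,\ldots,T_r]$-modules; each $\F_j$ is $A$-flat, so $\tor^A_*(N,k) = H_*(\F_\bullet \otimes_A k)$ inherits the structure of a finitely generated $\ZZ^s$-graded $k[T_1,\ldots,T_r]$-module, whose Hilbert function $t\mapsto \dim_k\tor^A_j(N_t,k)$ is polynomial for $t\gg 0$ by the multigraded Hilbert--Serre theorem. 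The same recipe with $A$ replaced by $k$ from the outset, working with $\G_\bullet\otimes_S k$ in place of $\G_\bullet\otimes_S A$, handles $\dim_k \tor^S_i(\M_{(*,t)},k)_\delta$.

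The main difficulty is conceptual: recognizing that the correct object to study is the single finitely generated $A[T_1,\ldots,T_r]$-module $H_i(\G_\bullet\otimes_S A)$, into which all the $\tor^S_i(\M_{(*,t)},A)$ fit as graded strands, so that the support and Hilbert-function questions reduce to standard invariants of a finitely generated multigraded module. After this setup each statement is a routine application of Noetherianness, Lemma~\ref{tame}, or multigraded Hilbert--Serre; the only subtle verification is that the $G$-strand of such a module remains finitely generated over $A[T_1,\ldots,T_r]$, which relies crucially on the $T_j$ being of $G$-degree zero.
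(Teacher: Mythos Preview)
Your proof is correct and follows essentially the same route as the paper's: take a graded free $R$-resolution $\G_\bullet$ of $\M$, pass to $\G_\bullet \otimes_S A$ (a complex of finitely generated free $B=A[T_1,\ldots,T_r]$-modules), and read off $\tor^S_i(\M_{(*,t)},A)$ as the $(*,t)$-strand of its homology $H_i$. The only cosmetic differences are that the paper takes $\Delta_i$ to be the set of $G$-shifts occurring in the $i$-th term of the resolution (a possibly larger finite set than your $\supp_G(H_i)$), isolates the $\delta$-strand as an explicit subcomplex $\F_\bullet^{[\delta]}$, and cites \cite[Theorem~1]{Ko1} for the eventual polynomiality where you invoke a multigraded Hilbert--Serre statement.
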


\begin{proof} Let $\F_\bullet$ be a  graded free resolution of $\M$ over $R$, where $\F_i = \bigoplus_{\eta, j} R(-\eta, -j)^{\beta^i_{\eta,j}}$ is of finite rank for $i=0$, and for any $i$ when $A$ is Noetherian. For  $t\in G'$, the $(*,t)$-strand of $\F_\bullet$, denoted by $\F_\bullet^t$, is a $G$-graded free resolution of $\M_{(*,t)}$ over $S = R_{(*,0)}$, that is not necessarily minimal. Its $i$-th term is
$$\F_i^t = \bigoplus_{\eta,j}S(-\eta)^{\beta^i_{\eta,j}} \otimes_A B_{t-j},$$
where $B = A[T_1, \dots, T_r]$.

Let $\Delta_i = \{\eta ~\big|~ \exists j : \beta^i_{\eta,j} \not= 0\}.$ The module $\tor^S_i(\M_{(*,t)},A) = H_i(\F_\bullet^t \otimes_S A)$ is a subquotient of the module $\bigoplus_{\eta,j}A(-\eta)^{\beta^i_{\eta,j}} \otimes_A B_{t-j}$, and (1) is proved.

To prove (2), observe first that $\tor^S_i(\M_{(*,t)},A)_\delta = H_i(\F_\bullet^t \otimes_S A)_\delta$ and  $\big(A(-\eta) \otimes_A B_{t-j}\big)_\delta = A_{\delta - \eta} \otimes_A B_{t-j}$ is zero if $\eta \not= \delta$. Thus, $H_i(\F_\bullet^t \otimes_S A)_\delta$ is equal to $H_i(\F_\bullet^{[\delta]} \otimes_S A)_t$, where $\F_\bullet^{[\delta]}$ is the subcomplex of $\F_\bullet$ given by
$$\F_i^{[\delta]} = \bigoplus_{j} R(-\delta, -j)^{\beta^i_{\delta,j}} = \bigoplus_{j}[S(-\delta) \otimes_A B(-j)]^{\beta^i_{\delta,j}}.$$

As $\F_\bullet^{[\delta]} \otimes_S A$ is a graded complex of finitely generated $B$-modules, $H_i(\F_\bullet^{[\delta]} \otimes_S A)$ is a finitely generated $B$-module for any $i$ when $A$ is Noetherian.   Similarly, $\tor^S_i(\M_{(*,t)},k)_\delta = H_i(\F_\bullet^t \otimes_S k)_\delta=H_i(\F_\bullet^{[\delta]} \otimes_S k)$  is a finitely generated $k[T_1, \dots, T_r]$-module for any $i$ when $A$ is Noetherian.

This proves (2) in view of Lemma \ref{tame} and \cite[Theorem 1]{Ko1}.
\end{proof}

\begin{remark}
In the context of point (2) above, there is a graded spectral sequence of graded $B$-modules with second term $E^2_{j,i}(t)=\tor^A_j( \tor^S_i(\M_{(*,t)},A)_\delta, k)$ that converges
to $ \tor^S_{i+j}(\M_{(*,t)},k)_\delta$. It follows that all terms $E^p_{j,i}(t)$ for $p\geq 2$ are finitely generated $k[T_1, \dots, T_r]$-modules.
In particular, one can write :
$$
\dim_{k} \tor^S_\ell (\M_{(*,t)},k)_\delta=\sum_{i+j=\ell} \dim_{k} E^\infty_{j,i}(t)\leq \sum_{i+j=\ell} \dim_{k} E^p_{j,i}(t), \quad \forall p\geq 2,
$$
which provides a control on the Hilbert function (and polynomial) of $ \tor^S_\ell (\M ,k)_\delta$ in terms of the ones of $ \tor^A_j( \tor^S_i(\M ,A)_\delta, k)$ for $i+j=\ell$.
\end{remark}

We are now ready to examine the asymptotic linear behavior of nonzero $G$-graded Betti numbers and non-vanishing degrees of local cohomology modules of $MI_1^{t_1}\cdots I_s^{t_s}$.

In the next two theorems, let $R = S[T_{i,j} ~|~ 1 \le i \le s, 1 \le j \le r_i]$. Then $R$ is equipped with a $G \times \ZZ^s$-graded structure in which $\deg_{G \times \ZZ^s}(x_i) = (\deg_G(x_i), 0)$ and $\deg_{G \times \ZZ^s}(T_{i,j}) = (0,e_i)$, where $e_i$ is the $i$-th element in the canonical basis of $\ZZ^s$. Recall that $\gamma_i = \deg_G(f_{i,j})$ and let $\gamma := (\gamma_1, \dots, \gamma_s)$. For $t = (t_1, \dots, t_s) \in \ZZ^s$, let $I^t := I_1^{t_1} \dots I_s^{t_s}, T^t := T_1^{t_1} \dots T_s^{t_s}, I^tT^t(\gamma . t) := I_1^{t_1}(t_1\gamma_1) T_1^{t_1} \dots I_s^{t_s}(t_s \gamma_s) T_s^{t_s}$ and $MI^tT^t(\gamma . t) := M I_1^{t_1}(t_1\gamma_1)T_1^{t_1} \dots I_s^{t_s}(t_s\gamma_s) T_s^{t_s}.$

\begin{theorem} \label{thm.Betti}
Assume that $i=0$ or $A$ is Noetherian. Then there exists a finite set $\Delta_i \subseteq G$ such that
\begin{enumerate}
\item For all $t \in \NN^s$, $\tor^S_i(MI_1^{t_1}\cdots I_s^{t_s},A)_\eta = 0$  if $\eta \not\in \Delta_i + t_1\gamma_1 +\cdots +t_s\gamma_s$.
\item There exists a subset $\Delta_i' \subseteq \Delta_i$ such that $\tor^S_i(MI_1^{t_1}\cdots I_s^{t_s},A)_{\eta + t_1\gamma_1 +\cdots +t_s\gamma_s}\not= 0$ for $t\gg 0$ and $\eta \in \Delta_i' $ and
$\tor^S_i(MI_1^{t_1}\cdots I_s^{t_s},A)_{\eta + t_1\gamma_1 +\cdots +t_s\gamma_s} = 0$ for $t\gg 0$ and $\eta \not\in \Delta_i' $.
\item  If, furthermore, $A \rightarrow k$ is a ring homomorphism to a field $k$, then for any $\delta$ and any $j$, the function
$$
\dim_{k} \tor^A_j( \tor^S_i(MI_1^{t_1}\cdots I_s^{t_s},A)_{\delta + t_1\gamma_1 +\cdots +t_s\gamma_s},k)
$$ is polynomial in the $t_i$s for $t \gg 0$, and the function
$$
\dim_{k} \tor^S_i(MI_1^{t_1}\cdots I_s^{t_s},k)_{\delta + t_1\gamma_1 +\cdots +t_s\gamma_s}
$$
 is polynomial in the $t_i$s for $t \gg 0$.
\end{enumerate}
\end{theorem}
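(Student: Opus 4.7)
The plan is to reduce the statement directly to Theorem~\ref{thm.samedegree} by packaging the entire family $\{MI^t\}_{t\in\NN^s}$, twisted in the $G$-direction so that the shift by $t\cdot\gamma = t_1\gamma_1+\cdots+t_s\gamma_s$ is absorbed, as a single finitely generated $G\times\ZZ^s$-graded $R$-module.

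Concretely, I would set
$$\M := \bigoplus_{t\in\NN^s} MI^t(t\cdot\gamma)\, T^t,$$
with $G\times\ZZ^s$-grading $\M_{(\delta,t)} := (MI^t)_{\delta + t\cdot\gamma}$, so that $\M_{(*,t)} = MI^t(t\cdot\gamma)$ as a $G$-graded $S$-module. I would give $\M$ an $R = S[T_{i,j}]$-module structure by letting $T_{i,j}$ act as multiplication by $f_{i,j}$; since $f_{i,j}$ has $G$-degree $\gamma_i$ while this action raises the $\ZZ^s$-index by $e_i$, it sends $\M_{(\delta,t)}$ into $\M_{(\delta,t+e_i)}$, matching the assigned degree $(0,e_i)$ of $T_{i,j}$. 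Finite generation of $\M$ over $R$ then follows from the surjection $M\otimes_S R \twoheadrightarrow \M$ sending $m\otimes T_{i_1,j_1}\cdots T_{i_k,j_k}$ to $m\cdot f_{i_1,j_1}\cdots f_{i_k,j_k}$, whose surjectivity is just the fact that $I^t$ is $S$-spanned by length-$|t|$ products of the $f_{i,j}$ with the correct multi-index multiplicities.

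The conclusion then follows by applying Theorem~\ref{thm.samedegree} with $G' = \ZZ^s$ and using the tautology
$$\tor^S_i(\M_{(*,t)},A)_\delta = \tor^S_i(MI^t,A)_{\delta + t\cdot\gamma}.$$
Part~(1) of Theorem~\ref{thm.samedegree} delivers the finite set $\Delta_i\subseteq G$ of part~(1) here. Defining $\Delta_i'\subseteq\Delta_i$ to be those $\delta$ for which $\tor^S_i(MI^t,A)_{\delta+t\cdot\gamma}$ is nonzero for $t\gg 0$, the tameness dichotomy in Theorem~\ref{thm.samedegree}(2) yields part~(2); and the polynomial-growth statements in Theorem~\ref{thm.samedegree}(2) transcribe verbatim into part~(3). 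The only real obstacle is the bookkeeping in the construction of $\M$: once the grading conventions are arranged so that $T_{i,j}\mapsto f_{i,j}$ gives a degree-preserving $R$-action, all of the asymptotic content has already been extracted in Theorem~\ref{thm.samedegree}, and no further Rees-algebra machinery is needed.
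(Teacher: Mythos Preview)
Your proposal is correct and is essentially identical to the paper's proof. The module $\M$ you build is exactly what the paper calls the shifted multi-Rees module $M\R = \bigoplus_{t\ge 0} MI^t T^t(\gamma\cdot t)$, and the paper establishes the $R$-module structure and finite generation via the surjection $R\twoheadrightarrow \R$, $T_{i,j}\mapsto f_{i,j}T_i$, before invoking Theorem~\ref{thm.samedegree} in the same way; so despite your closing remark, what you have written \emph{is} the Rees-module argument, just without the name.
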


\begin{proof} Let $\R := \bigoplus_{t \ge 0}I^{t}T^{t}(\gamma .t)$ and $M\R := \bigoplus_{t \ge 0}MI^{t}T^{t}(\gamma .t)$ denote the (shifted) multi-Rees algebra and the multi-Rees module with respect to $I_1, \dots, I_s$, and $M$. The natural surjective map $\phi: R \twoheadrightarrow \R$ that sends $x_i$ to $x_i$ and $T_{i,j}$ to $f_{i,j}T_i$ makes $M\R$ a finitely generated $G \times \ZZ^s$-graded module over $R$.

Observe that, for any $\delta \in G$ and $t \in \ZZ^s$, $M\R_{(\delta,t)} \simeq [MI^t(\gamma .t)]_\delta = [MI^t]_{\delta+\gamma .t}$, where in the last term $\delta + \gamma .t := \delta + t_1\gamma_1 + \dots + t_s\gamma_s$. Thus, the assertion follows by applying Theorem \ref{thm.samedegree} to the $R$-module $\M :=M\R$.
\end{proof}

\begin{theorem} \label{thm.localcohomology}
Let $\b$ be a $G$-homogeneous ideal in $S$ such that for any $i \ge 0$ and $\delta \in G$, $H^i_\b(S)_\delta$ is a finitely generated $A$-module. Then, if $A$ is Noetherian,  for $i \ge 0$, there exists a subset $\Lambda_i \subseteq G$ such that
\begin{enumerate}
\item $H^i_\b(MI_1^{t_1}\cdots I_s^{t_s})_{\eta  + t_1\gamma_1 +\cdots +t_s\gamma_s} \not= 0$ for $t = (t_1, \dots, t_s) \gg 0$ if only if $\eta \in \Lambda_i$.
\item  If, furthermore, $A \rightarrow k$ is a ring homomorphism to a field $k$, then for any $\delta$ and any $j$, the function
$$
\dim_{k} \tor^A_j(  H^i_\b(MI_1^{t_1}\cdots I_s^{t_s})_{\delta+t_1\gamma_1 +\cdots +t_s\gamma_s},k)
$$
is a polynomial in the $t_i$s for $t \gg 0$.
\end{enumerate}
\end{theorem}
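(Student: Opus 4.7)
The plan is to mimic the multi-Rees strategy of Theorem~\ref{thm.Betti}, replacing the Tor input supplied by Theorem~\ref{thm.samedegree} with a local-cohomology analogue tailored to the module $\M:=M\R=\bigoplus_{t\ge 0}MI^{t}(\gamma\cdot t)T^{t}$. As in that proof, $\M$ is a finitely generated $G\times\ZZ^s$-graded $R$-module, and since $\b\subseteq S$ is an ideal of the Noetherian ring $S$, local cohomology at $\b$ commutes with direct sums in the $\ZZ^s$-grading; consequently,
$$
H^i_\b(\M)_{(\delta,t)}\;\cong\;H^i_\b(MI^{t})_{\delta+\gamma\cdot t}.
$$
The whole theorem then reduces to showing that, for every $\delta\in G$ and every $i\ge 0$, the strand $N:=H^i_\b(\M)_{(\delta,*)}$ is a finitely generated $\ZZ^s$-graded module over $B:=A[T_{i,j}]$.

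For that finite-generation claim I would take a graded $R$-free resolution $\F_\bullet\to\M$ with $\F_q=\bigoplus_{\ell\in E_q}R(-\eta_{q,\ell},-t_{q,\ell})$ of finite rank (possible because $R$ is Noetherian and $\M$ is finitely generated) and consider the double complex $C^\bullet_\b(S)\otimes_S\F_\bullet$. One spectral sequence collapses to $H^*_\b(\M)$ because $C^\bullet_\b(S)$ is a bounded complex of flat $S$-modules. The other, using that $R=S\otimes_A B$ is $S$-flat so that $H^p_\b(\F_q)=H^p_\b(S)\otimes_S\F_q$, takes the form
$$
E_2^{p,-q}=\tor^S_q\bigl(H^p_\b(S),\M\bigr)\;\Longrightarrow\;H^{p-q}_\b(\M).
$$
Using $H^p_\b\bigl(R(-\eta,-t_0)\bigr)=\bigl(H^p_\b(S)\otimes_A B\bigr)(-\eta,-t_0)$, the $(\delta,*)$-strand of $H^p_\b(\F_q)$ is
$$
\bigoplus_{\ell\in E_q}H^p_\b(S)_{\delta-\eta_{q,\ell}}\otimes_A B(-t_{q,\ell}),
$$
which is a finitely generated $B$-module by the standing hypothesis that each $H^p_\b(S)_\nu$ is $A$-finite. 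Noetherianity of $B$ then propagates finite generation to every $E_r$ page and to $E_\infty$; boundedness of the \v{C}ech complex produces a finite filtration on $H^n_\b(\M)$ in each total degree, so $H^n_\b(\M)_{(\delta,*)}$ is finitely generated over $B$ as claimed.

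Granted the finite-generation claim, part~(1) follows immediately from Lemma~\ref{tame} applied to the $\ZZ^s$-graded $B$-module $N$: define $\Lambda_i$ as the set of $\delta\in G$ for which $H^i_\b(MI^{t})_{\delta+\gamma\cdot t}$ is nonzero for some (equivalently, all sufficiently large) $t$. For part~(2), I would resolve $N$ by finite-rank $\ZZ^s$-graded free $B$-modules. Since $B$ is $A$-free (polynomial over $A$), every term is $A$-flat, so tensoring the resolution with $k$ over $A$ produces a complex of finite-rank $\ZZ^s$-graded free $B_k:=k[T_{i,j}]$-modules computing $\tor^A_*(N,k)$. The homology $\tor^A_j(N,k)$ is therefore a finitely generated $\ZZ^s$-graded $B_k$-module, and its Hilbert function is polynomial in $t$ for $t\gg 0$ by \cite[Theorem~1]{Ko1}. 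The hard part of the whole plan is the spectral-sequence bookkeeping of the previous paragraph: translating the $A$-finite generation of $H^p_\b(S)_\nu$ and the Noetherianity of $B$ into finite generation of every strand $H^i_\b(\M)_{(\delta,*)}$, while carefully controlling the finite filtration on the abutment in each total degree.
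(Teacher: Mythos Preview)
Your argument is correct and coincides with the paper's: pass to the multi-Rees module $M\R$, use the spectral sequence $H^i_{\b}(\F_j)\Rightarrow H^{i-j}_{\b}(M\R)$ coming from a finite-rank $R$-free resolution together with the hypothesis that each $H^i_\b(S)_\nu$ is $A$-finite to conclude that every strand $H^i_\b(M\R)_{(\delta,*)}$ is a finitely generated $B$-module, and then invoke Lemma~\ref{tame} for (1) and \cite[Theorem~1]{Ko1} for (2). One minor correction: in your description of $\Lambda_i$, ``nonzero for some $t$'' is not equivalent to ``nonzero for all sufficiently large $t$''; you want the latter, which the paper records as $\Lambda_i=\{\delta\in G\mid K_\delta\neq H^0_\a(K_\delta)\}$ with $\a=\bigcap_{i}(T_{i,1},\dots,T_{i,r_i})$.
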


\begin{proof} Since taking local cohomology respects the $G$-homogeneous degree, we have
$$H^i_\b(MI^t)_{\delta+\gamma .t} = H^i_{\b R}(M\R)_{(\delta, t)} = \big(H^i_{\b R}(M\R)_{(\delta,*)}\big)_t.$$

Let $B = A[T_{i,j}, 1\leq i\leq s,\; 1\leq j\leq r_i]$. Since $B$ is a flat extension of $A$, $H^i_{\b R}(R)_{(\delta,*)} = H^i_{\b R}(B \otimes_A S)_{(\delta,*)}$ is a finitely generated $B$-module. Let $\F_\bullet$ be the minimal $G \times \ZZ^s$-graded free resolution of $M\R$ over $R$. Since $A$ is Noetherian,  each term $\F_j$ of $\F_\bullet$ is of finite rank. This implies that for all $\delta \in G$, $i \ge 0$ and $j \ge 0$, $H^i_{\b R}(\F_j)_{(\delta,*)}$ is a finitely generated $B$-module. The spectral sequence $H^i_{\b R}(\F_j )\Rightarrow H^{i-j}_{\b R}(M\R )$
implies that $H^i_{\b R}(M\R)_{(\delta,*)}$ is a finitely generated multigraded $B$-module.  This proves (2) in view of  \cite[Theorem 1]{Ko1}.

Notice that $H^i_{\b R}(M\R)_{(\delta,t)} = 0$ for all $t \gg 0$ if and only if $K_\delta = H^i_{\b R}(M\R)_{(\delta,*)}$ is annihilated by a power of the ideal $\a :=\bigcap_{i=1}^s (T_{i,1}, \dots, T_{i,r_i})$. Hence (1) holds with
$$\Lambda_i := \{\delta \in G ~\big|~ K_\delta \not= H^0_{\a}(K_\delta)\}.$$
\end{proof}


\section{Forms of arbitrary degrees}

This section is devoted to proving our main result in its full generality, when the ideals $I_i$s are generated in arbitrary degrees.
We start by recalling the notion of a Stanley decomposition of multigraded modules.

\begin{definition} Let $\G$ be a finitely generated abelian group and let $B = A[T_1, \dots, T_r]$ be a $\G$-graded polynomial ring over a commutative ring $A$. Let $M$ be a finitely generated $\G$-graded $B$-module. A {\it Stanley decomposition} of $M$ is a finite decomposition  of the form
$$M = \bigoplus_{i=1}^m u_iA[Z_i],$$
where the direct sum is as $A$-modules, $u_i$s are $\G$-homogeneous elements in $M$, $Z_i$s are subsets (could be empty) of the variables $\{T_1, \dots, T_r\}$, and $u_iA[Z_i]$ denotes the $A$-submodule of $M$ generated by elements of the form $u_im$ where $m$ is a monomial in the polynomial ring $A[Z_i]$.
\end{definition}

The following lemma is well known in $\NN$-graded or standard $\NN^n$-graded
situations (cf. \cite{BG, BKU, HP}).

\begin{lemma} \label{lem.monomial}
Let $\G$ be a finitely generated abelian group and let $B = A[T_1, \dots, T_r]$ be a $\G$-graded polynomial ring. Let $I$ be a monomial ideal in $B$. Then a Stanley decomposition of $B/I$ exists.
\end{lemma}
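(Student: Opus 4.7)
My plan is to prove the lemma by induction on the number of variables $r$. When $r = 0$ one has $B = A$, and a monomial ideal is either $(0)$, for which $B/I = 1 \cdot A[\emptyset]$ is a Stanley decomposition (with $Z = \emptyset$), or $A$ itself, for which the empty decomposition works; this settles the base.

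For the inductive step, set $B' = A[T_1, \dots, T_{r-1}]$. For each $i \ge 0$, let $I_i \subseteq B'$ be the monomial ideal with $I \cap T_r^i B' = T_r^i I_i$; concretely, $I_i$ is generated by those monomials $m \in B'$ such that $T_r^i m \in I$. These form an ascending chain $I_0 \subseteq I_1 \subseteq \cdots$ of monomial ideals in $B'$. By Dickson's lemma, any monomial ideal in a polynomial ring is generated by finitely many monomials regardless of whether $A$ is Noetherian; applying this to $\bigcup_i I_i$ forces the chain to stabilize at some index $N$, so $I_N = I_{N+1} = \cdots$.

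Decomposing $B$ as an $A$-module by rows $T_r^i B'$ and collapsing the tail of constant rows gives
$$B/I \;=\; \bigoplus_{i \ge 0} T_r^i \cdot (B'/I_i) \;=\; \bigoplus_{i=0}^{N-1} T_r^i \cdot (B'/I_i) \;\oplus\; T_r^N \cdot (B'/I_N)[T_r].$$
Applying the inductive hypothesis to each $B'/I_i$ for $0 \le i \le N$ produces a Stanley decomposition $B'/I_i = \bigoplus_j u_{i,j}\, A[Z_{i,j}]$ with $Z_{i,j} \subseteq \{T_1, \dots, T_{r-1}\}$ and $u_{i,j}$ a $\G$-homogeneous monomial. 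Substituting these in yields
$$B/I \;=\; \bigoplus_{i=0}^{N-1} \bigoplus_j (T_r^i u_{i,j})\, A[Z_{i,j}] \;\oplus\; \bigoplus_j (T_r^N u_{N,j})\, A[Z_{N,j} \cup \{T_r\}],$$
which is the sought Stanley decomposition; $\G$-homogeneity of each generator is automatic, since each is a product of $\G$-homogeneous monomials.

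The only delicate point is the stabilization of $\{I_i\}$ in the non-Noetherian setting, which we handle via Dickson's lemma — a purely combinatorial fact about the partial order on $\NN^{r-1}$, independent of $A$. The remaining verifications, that the two displayed equalities of $A$-modules hold, reduce to comparing monomial $A$-bases and are routine.
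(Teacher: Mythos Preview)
Your proof is correct and is precisely the standard inductive argument that the paper defers to via its citations to \cite{BG} and \cite{HP}; the paper's own proof consists solely of that reference together with the remark that every monomial is automatically $\G$-homogeneous, which you handle explicitly at the end. Your care in invoking Dickson's lemma rather than Noetherianity of $A$ for the stabilization of the chain $I_0\subseteq I_1\subseteq\cdots$ is appropriate, since the paper does not assume $A$ is Noetherian in this lemma.
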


\begin{proof} The proof follows along the same lines as in the proof of
\cite[Corollary 6.4]{HP} or \cite[Theorem 2.1]{BG}, as one notices that any
monomial is a homogeneous element.
\end{proof}

\begin{theorem} \label{thm.arbitrarydegree}
Let $\G$ be a finitely generated abelian group, $B = A[T_1, \dots, T_r]$ be a $\G$-graded polynomial ring over a commutative ring $A$ and  $\M$ be a finitely generated $\G$-graded $B$-module. 
Let $\Gamma$ denote the set of subsets of $\{\deg_\G(T_i)\}_{i=1}^r$ whose elements are linearly independent over $\ZZ$.  Then there exist a collection of pairs $(\delta_p ,E_p)\in \G\times \Gamma$, for $p = 1, \dots, m$, such that
$$\supp_\G(\M) = \bigcup_{p=1}^m \big( \delta_p + \langle E_p\rangle \big),$$
where $\langle E_p\rangle$ represents the free submonoid of $\G$ generated by elements in $E_p$.
\end{theorem}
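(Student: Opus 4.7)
The plan is to reduce $\supp_\G(\M)$ to a combinatorial question about finitely generated submonoids of $\G$, and then to resolve that question by induction on the number of generators. First, write $\M$ as $F/K$ for some graded free cover $F = \bigoplus_{i=1}^n B(-\mu_i)$ and graded submodule $K \subseteq F$. Fix a monomial order $\prec$ on $F$ and replace $K$ by $\operatorname{in}_\prec(K)$: Proposition \ref{lem.initial} yields $\supp_\G(\M) = \supp_\G(F/\operatorname{in}_\prec(K))$, and since $\operatorname{in}_\prec(K)$ is a monomial submodule of $F$, it must be of the form $\bigoplus_i J_i e_i$ for monomial ideals $J_i \subseteq B$. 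Hence $\supp_\G(\M) = \bigcup_{i=1}^n \bigl(\mu_i + \supp_\G(B/J_i)\bigr)$. Second, applying Lemma \ref{lem.monomial} to each $B/J_i$ gives a Stanley decomposition $B/J_i = \bigoplus_j u_{i,j} A[Z_{i,j}]$, in which $u_{i,j}$ is a $\G$-homogeneous monomial and $Z_{i,j} \subseteq \{T_1,\dots,T_r\}$. The support of the summand $u_{i,j} A[Z_{i,j}]$ equals $\deg_\G(u_{i,j}) + \langle \deg_\G(T) : T \in Z_{i,j}\rangle$, so $\supp_\G(\M)$ has been written as a finite union of translates of submonoids of the form $\langle \alpha_1,\dots,\alpha_k\rangle$ with each $\alpha_\ell \in \{\deg_\G(T_1),\dots,\deg_\G(T_r)\}$, though these generating sets are not yet required to be linearly independent.

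The main step is the following combinatorial claim, which I would prove by induction on $k$: for any $\alpha_1,\dots,\alpha_k \in \G$, the monoid $\langle \alpha_1,\dots,\alpha_k\rangle$ is a finite union of translates $\gamma + \langle E\rangle$ with $E \subseteq \{\alpha_1,\dots,\alpha_k\}$ linearly independent. If the $\alpha_i$ are already linearly independent there is nothing to prove. Otherwise, choose any nonzero $\mathbf{n} = (n_1,\dots,n_k)$ in the kernel of $\ZZ^k \to \G$ sending $e_i$ to $\alpha_i$, and, negating $\mathbf{n}$ if necessary, arrange that $I^+ := \{i : n_i > 0\}$ is nonempty. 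Given any $m = \sum a_i \alpha_i$ with $a_i \in \NN$, set $q := \min_{i \in I^+}\lfloor a_i/n_i\rfloor$ and pick $i_0 \in I^+$ attaining this minimum. Subtracting the relation $\sum n_i \alpha_i = 0$ exactly $q$ times produces a new representation $m = \sum a_i' \alpha_i$ with $a_i' \in \NN$ and $0 \le a_{i_0}' < n_{i_0}$: non-negativity is automatic for $i \in I^+$ since $a_i \ge q n_i$ by the choice of $q$, for $i \notin I^+$ since $-qn_i \ge 0$, and $a_{i_0}' \in [0,n_{i_0})$ by definition of $q$. It follows that
$$\langle \alpha_1,\dots,\alpha_k\rangle \;=\; \bigcup_{i_0 \in I^+} \bigcup_{r=0}^{n_{i_0}-1} \bigl( r\alpha_{i_0} + \langle \alpha_j : j \neq i_0\rangle \bigr),$$
and the induction hypothesis applied to each $(k-1)$-generated submonoid $\langle \alpha_j : j \neq i_0\rangle$ completes the proof, since the linearly independent subsets it produces remain subsets of $\{\alpha_1,\dots,\alpha_k\}$.

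The hard part is arriving at the correct inductive step. A naive strategy that seeks a single eliminable generator via a periodicity $N\alpha_{i_0} \in \langle \alpha_j : j \neq i_0\rangle$ fails in examples such as $\alpha_1 = (1,0),\; \alpha_2 = (0,1),\; \alpha_3 = (-1,-1)$ in $\ZZ^2$, where the only relation (up to scalar) is $\alpha_1 + \alpha_2 + \alpha_3 = 0$ and no positive multiple of any single $\alpha_{i_0}$ lies in the monoid spanned by the other two; one must instead use the relation to adjust several coordinates of a representation simultaneously, as above. Combining this combinatorial claim with the two reduction steps yields the decomposition of $\supp_\G(\M)$ claimed by the theorem.
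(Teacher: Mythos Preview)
Your argument is correct, and the first two reduction steps (passing to $F/\operatorname{in}_\prec(K)$, then to a direct sum of $B/J_i$'s, then taking Stanley decompositions) match the paper's proof exactly. The divergence is in the final step, where one must show that the submonoid $\langle \deg_\G(T) : T \in Z_{i,j}\rangle$ decomposes into translates of free submonoids generated by linearly independent subsets of $\{\deg_\G(T_i)\}$. You do this by a direct combinatorial induction on the number of generators: given a nontrivial relation $\sum n_i\alpha_i=0$, you subtract it enough times to force one coordinate into a bounded range, thereby reducing to monoids on fewer generators. The paper instead introduces the toric binomial ideal $H=(\T^\alpha-\T^\beta:\deg_\G \T^\alpha=\deg_\G \T^\beta)$, observes that $B/H$ (and hence $B/\operatorname{in}_\prec H$) has at most one monomial in each degree, and applies the Stanley decomposition lemma a second time to $B/\operatorname{in}_\prec H$; the one-monomial-per-degree property then forces each Stanley piece $u_jA[Z_j]$ to have linearly independent degree set. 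Your route is more elementary and self-contained, avoiding a second pass through initial ideals and Stanley decompositions; the paper's route is more uniform in its tools and, as a byproduct, gives a \emph{disjoint} decomposition of $\supp_\G(B)$ for each polynomial subring (though not for $\supp_\G(\M)$ as a whole, as the paper's subsequent remark shows).
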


\begin{proof} Since $\M$ is a finitely generated $\G$-graded $B$-module, there exists a homogeneous surjective map $\phi: F \twoheadrightarrow \M$ from a free $B$-module $F$ to $\M$. We can write $F = \bigoplus_{i=1}^mBe_i$, where $\deg_G(e_i)$ represents the degree of the $i$-th generator of $M$. Let $K = \operatorname{ker} \phi$. Then $\M \simeq F/K$. In particular, $\supp_\G(\M) = \supp_\G(F/K)$.

Extend any monomial order on $B$ to a monomial order on $F = \bigoplus_{i=1}^mBe_i$, by ordering the $e_i$'s.  Since $\M$ is $\G$-graded, so is $K$. Thus, by Proposition \ref{lem.initial}, we have $\supp_\G(F/K) = \supp_\G(F/\operatorname{in}_\prec(K))$. Therefore,
$$\supp_\G(\M) = \supp_\G(F/\operatorname{in}_\prec(K)).$$

Observe that $\operatorname{in}_\prec(K)$ is generated by monomials of the form $\T^{\alpha}e_i$ (where $\T^\alpha = T_1^{\alpha_1} \cdots T_r^{\alpha_r}$ for $\alpha = (\alpha_1, \dots, \alpha_r) \in \NN^r$). Let $I_i$ be the monomial ideal in $B$ generated by all monomials $\T^{\alpha}$ for which $\T^{\alpha}e_i \in \operatorname{in}_\prec(K)$. Clearly, $F/\operatorname{in}_\prec(K) \simeq \bigoplus_{i=1}^m \dfrac{B}{I_i}e_i.$

By Lemma \ref{lem.monomial}, for each $i = 1, \dots, m$, there exists a Stanley decomposition of $\dfrac{B}{I_i}e_i$
$$\dfrac{B}{I_i}e_i \simeq \bigoplus_{j=1}^{m_i} u_{ij}A[T_{ij}],$$
where $u_{ij}$ are homogeneous elements of $\dfrac{B}{I_i}e_i$ and $T_{ij}$ are subsets (could be empty) of the variables $\{T_1, \dots, T_r\}$ in $B$. This gives
$$F/\operatorname{in}_\prec(K) \simeq \bigoplus_{i=1}^m \bigoplus_{j=1}^{m_i} u_{ij}A[T_{ij}].$$
Thus, $\supp_\G(F/\operatorname{in}_\prec(K))$ can be written as a finite union of the form
$$\supp_\G(F/\operatorname{in}_\prec(K)) = \bigcup_{i,j} \supp_\G\big(u_{ij}A[T_{ij}]\big).$$
Let $\delta_{ij} = \deg_\G(u_{ij})$. Then
$$\supp_\G(F/\operatorname{in}_\prec(K)) = \bigcup_{i,j} \big(\delta_{ij} + \supp_\G(A[T_{ij}]) \big).$$
To prove the theorem, it now suffices to show that $\supp_\G(A[T_{ij}])$ can be decomposed into a union of free submonoids of $\G$ of the form $\langle E \rangle$, where $E$ is a linearly independent subset in $\Gamma$.

Since $A[T_{ij}]$ is a polynomial ring whose variables are variables of $B$, without loss of generality, we may assume that $T_{ij} = \{T_1, \dots, T_r\}$, i.e., $A[T_{ij}] = B$. Let $H$ be the binomial ideal in $B$ generated by $\{\T^\alpha - \T^\beta ~\big|~ \deg_\G(\T^\alpha) = \deg_\G(\T^\beta)\}$. Then taking the quotient $B/H$ is the same as identifying monomials of the same degree in $B$. Thus, we have
$$
\supp_\G(B) = \supp_\G(B/H)= \supp_\G(B/\operatorname{in}_\prec (H))
$$
and
\begin{align}
\Big(\dfrac{B}{H}\Big)_\gamma =
\Big(\dfrac{B}{\operatorname{in}_\prec (H)}\Big)_\gamma =\left\{
\begin{array}{cl}
A & \text{if } \gamma \in \supp_\G(B/H) \\
0 & \text{otherwise.}
\end{array} \right. \label{eq.HF}
\end{align}

By Lemma \ref{lem.monomial}, a Stanley decomposition of $B/\operatorname{in}_\prec (H)$ exists. That is, we can write
$$B/\operatorname{in}_\prec (H) = \bigoplus_{j=1}^s u_jA[Z_j]$$
where $u_j$ are $\G$-homogeneous elements of $B/\operatorname{in}_\prec (H)$ and $Z_j$ are subsets (could be empty) of the variables $\{T_1, \dots, T_r\}$. Let $E_j$ be the set of degrees of variables in $Z_j$. It further follows from (\ref{eq.HF}) that $B/\operatorname{in}_\prec (H)$ has at most one monomial in each degree. This implies that the support of $u_jA[Z_j]$ are all disjoint and each set $E_j$ is linearly independent. Hence, by letting $\sigma_j = \deg_\G(u_j)$, we have
$$\supp_\G(B) = \coprod_{j=1}^s \big(\sigma_j + \langle E_j \rangle \big).$$
The theorem is proved.
\end{proof}

\begin{remark} It would be nice if the union in Theorem \ref{thm.arbitrarydegree} is a disjoint union. However, this is not true. Let $B = A[x,y]$ be a $\ZZ$-graded polynomial ring with $\deg(x) = 4$ and $\deg(y) = 7$ (hence, $\Gamma = \{4,7\}$). Let $\M = B/(x) \oplus B/(y) \simeq A[y]\oplus A[x]$. Then $\supp_\ZZ(\M) = \{4a+7b ~\big|~ a,b \in \ZZ_{\ge 0}\}.$ Moreover, linearly independent subsets of $\Gamma$ are $\{4\}$ and $\{7\}$. It can be easily seen that $\supp_\ZZ(\M)$ cannot be written as disjoint union of shifted free submonoids of $\ZZ$ generated by $4$ and/or by $7$.
\end{remark}

For a vector $\c = (c_1, \dots, c_s) \in \ZZ^s$ and a tuple $E = (\nu_1, \dots, \nu_s)$ of elements in $G$, we shall denote $\Delta E$ the empty tuple if $s\leq 1$ and the $(s-1)$-tuple $(\nu_2 -\nu_1,\ldots ,\nu_s -\nu_{s-1})$ else,
and by $\c.E$ the $G$-degree $\sum_{j=1}^s c_j\nu_j$. 
If $E$ and $E'$ are tuples, we denote by $E\vert E'$ the concatenation of $E$ and $E'$.

\begin{remark}
With some simple linear algebra arguments, it can be seen that for tuples $E_1,\ldots ,E_s$ of elements of $G$, the tuple of elements of $G\times \ZZ^s$, $E_1\times \{ e_1\} \vert \cdots \vert E_s\times \{ e_s\}$, where $e_i$ is the $i$-th basis element of $\ZZ^s$, is linearly independent if and only if $\Delta E_{1}\vert \cdots \vert\Delta E_{s}$ is linearly independent. These equivalent conditions imply that for $(\c_1,\ldots ,\c_s)\not= (\c'_1,\ldots ,\c'_s)$, with $\c_i ,\c'_i \in \ZZ_{\ge 0}^{|E_{i}|}$ and $\vert \c_i \vert =\vert \c'_i \vert $ for all $i$, one has $\c_1.E_{1} +\cdots +\c_s.E_{s}\not= \c'_1.E_{1} +\cdots +\c'_s.E_{s}$. This last fact is a direct corollary of the linear independence of $E_1\times \{ e_1\} \vert \cdots \vert E_s\times \{ e_s\}$, as
$\c_1.(E_{1}\times \{ e_1\}) +\cdots +\c_s.(E_{s}\times \{ e_s\})=(\c_1.E_{1} +\cdots +\c_s.E_{s})\times (\vert \c_1 \vert e_1+\cdots +\vert \c_s \vert e_s)$.
\end{remark}

We shall now prove our main result. Recall that $I_i = (f_{i,1}, \dots, f_{i,r_i})$ and let $\gamma_{i,j} = \deg_G(f_{i,j})$.

\begin{theorem} \label{thm.arbitrarydegree2}
Let $G$ be a finitely generated abelian group and let $S=A[x_1, \dots, x_n]$ be a $G$-graded algebra over a commutative ring $A\subseteq S_0$. Let $I_i=(f_{i,1}, \dots, f_{i,r_i})$ for $i=1,\ldots s$ be  $G$-homogeneous ideals in $S$, and let $M$ be a finitely generated $G$-graded $S$-module. Set $\Gamma_i = \{\deg_G(f_{i,j})\}_{j=1}^{r_i}$. Let $\ell \ge 0$ and assume that $\ell =0$ or $A$ is Noetherian.

 There exist a finite collection of elements $\delta_p^\ell \in G$, a finite collection of integers $t_{p,i}^\ell$, and a finite collection of non-empty tuples $E_{p,i}^\ell \subseteq \Gamma_i$, such that $\Delta E_{p,1}^\ell \vert \cdots \vert\Delta E_{p,s}^\ell$ is linearly independent for all $p$, satisfying :

$$\supp_G(\tor^S_\ell(MI_1^{t_1}\cdots I_s^{t_s},A)) = \bigcup_{p=1}^m \big( \delta_p^\ell  +\bigcup_{\c_i \in \ZZ_{\ge 0}^{|E_{p,i}^\ell |}, |\c_i | = t_i-t^\ell_{p,i}} \c_1.E_{p,1}^\ell +\cdots +\c_s.E_{p,s}^\ell \big),
$$
 if $t_i\geq \max_p \{ t_{p,i}^\ell\}$ for all $i$.
\end{theorem}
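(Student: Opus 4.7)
The plan is to deduce the theorem from Theorem~\ref{thm.arbitrarydegree} by encoding the family $\{MI_1^{t_1}\cdots I_s^{t_s}\}_{t\in \NN^s}$ as a single finitely generated multigraded module over an auxiliary polynomial ring, exactly as outlined in the introduction.

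First, I would form the multi-Rees module $M\R = \bigoplus_{t\in\NN^s} MI_1^{t_1}\cdots I_s^{t_s}$ and realize it as a finitely generated $G\times \ZZ^s$-graded module over $R = S[T_{i,j}\,:\,1\le i\le s,\,1\le j\le r_i]$, where we put $\deg_{G\times\ZZ^s}(a) = (\deg_G(a),0)$ for $a\in S$ and $\deg_{G\times\ZZ^s}(T_{i,j}) = (\gamma_{i,j},e_i)$, and the surjection $R \twoheadrightarrow \R$ sends $T_{i,j}$ to $f_{i,j}$ (viewed in $\R$-degree $e_i$). The strand $(M\R)_{G\times\{t\}}$ is, as a $G$-graded $S$-module, canonically isomorphic to $MI_1^{t_1}\cdots I_s^{t_s}$.

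Next I would invoke the strand/resolution observation from the introduction. Let $\F_\bullet$ be a $G\times\ZZ^s$-graded free resolution of $M\R$ over $R$. Since the elements of $S\subseteq R$ have $\ZZ^s$-degree $0$, multiplication by any $x_i$ preserves the $\ZZ^s$-degree, and hence taking a $G\times\{t\}$-strand commutes both with passing to the quotient by $(x_1,\dots,x_n)$ and with taking homology. It follows that
\[
\tor^S_\ell\bigl(MI_1^{t_1}\cdots I_s^{t_s},A\bigr) \;\cong\; H_\ell\bigl(\F_\bullet\otimes_S A\bigr)_{G\times\{t\}} \;=\; \tor^R_\ell(M\R,B)_{G\times\{t\}},
\]
where $B = R\otimes_S A = A[T_{i,j}]$ is a $G\times\ZZ^s$-graded polynomial ring in which $T_{i,j}$ has degree $(\gamma_{i,j},e_i)$. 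Under the hypothesis that $\ell = 0$ or that $A$ is Noetherian, the terms of $\F_\bullet$ can be taken of finite rank up to homological degree $\ell$, so $\N := \tor^R_\ell(M\R,B)$ is a finitely generated $G\times\ZZ^s$-graded $B$-module.

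Then I would apply Theorem~\ref{thm.arbitrarydegree} to $\N$ with $\G = G\times\ZZ^s$, obtaining a finite decomposition
\[
\supp_{G\times\ZZ^s}(\N) \;=\; \bigcup_{p=1}^m \bigl((\delta_p,\tau_p) + \langle E_p\rangle\bigr),
\]
where each $E_p$ is a linearly independent subtuple of $\{(\gamma_{i,j},e_i)\}$. Splitting $E_p$ according to the $e_i$-component writes $E_p$ as the concatenation $E_{p,1}\times\{e_1\}\,\vert\cdots\vert\,E_{p,s}\times\{e_s\}$ with $E_{p,i}\subseteq \Gamma_i$; by the remark preceding the theorem, the linear independence of this concatenation is equivalent to the linear independence of $\Delta E_{p,1}\,\vert\cdots\vert\,\Delta E_{p,s}$.

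To conclude, I would intersect each piece $(\delta_p,\tau_p)+\langle E_p\rangle$ with $G\times\{t\}$. A point of this piece has $\ZZ^s$-component $\tau_p + \sum_i |\c_i| e_i$, so it lies in $G\times\{t\}$ precisely when $|\c_i| = t_i - \tau_{p,i}$ for every $i$ (forcing $t_i\ge \tau_{p,i}$), and then its $G$-component is $\delta_p + \sum_i \c_i.E_{p,i}$. This yields exactly the desired formula with $\delta_p^\ell := \delta_p$ and $t_{p,i}^\ell := \tau_{p,i}$. The only bookkeeping point is that Theorem~\ref{thm.arbitrarydegree} may produce indices $p$ with some $E_{p,i}=\emptyset$; such a piece contributes to $(M\R)_{G\times\{t\}}$ only when $t_i=\tau_{p,i}$ and so disappears for sufficiently large $t$. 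I would therefore enlarge the thresholds $t_{p,i}^\ell$ (for $p$ with all $E_{p,i}$ non-empty) beyond all the critical values $\tau_{p',i}$ coming from degenerate indices $p'$, so that for $t_i\ge\max_p t_{p,i}^\ell$ only the non-degenerate $p$'s survive, matching the statement exactly.

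The main obstacles I expect are verifying carefully that the $G\times\{t\}$-strand commutes both with $(-)\otimes_S A$ and with homology — this is where the degree hypothesis $\deg_{G\times\ZZ^s}(x_i)\in G\times\{0\}$ is essential — and the combinatorial translation of the intersection of a translated free submonoid of $G\times\ZZ^s$ with a horizontal slice $G\times\{t\}$ into the form prescribed by the theorem, for which the remark on linear independence of concatenations does the key work.
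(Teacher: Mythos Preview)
Your proposal is correct and follows essentially the same route as the paper: form the multi-Rees module $M\R$ as a finitely generated $G\times\ZZ^s$-graded $R$-module, take a graded free $R$-resolution $\F_\bullet$ (with terms of finite rank when $A$ is Noetherian, and $\F_0$ of finite rank in any case), use the strand identity $\tor^S_\ell(MI^t,A)\cong H_\ell(\F_\bullet\otimes_R R/\m R)_{(*,t)}$, and then apply Theorem~\ref{thm.arbitrarydegree} to the finitely generated $B$-module $H_\ell(\F_\bullet\otimes_R B)$ and slice at $G\times\{t\}$. Your extra bookkeeping paragraph about discarding indices $p$ with some $E_{p,i}=\emptyset$ by enlarging the thresholds is a point the paper leaves implicit, and handling it as you do is the natural fix.
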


\begin{proof} As before, we use $t$ to denote $(t_1, \dots, t_s) \in \ZZ^s$, and let $\R := \bigoplus_{t \ge 0}I^{t}T^{t}$ and $M\R := \bigoplus_{t \ge 0}MI^{t}T^{t}$. Consider $R = A[x_1, \dots, x_n][ T_{i,j}, 1\leq i\leq s,\; 1\leq j\leq r_i]$, the $G \times \ZZ^s$-graded polynomial ring over $A[x_1, \dots, x_n]$ with $\deg_{G \times \ZZ^s}(x_i) = (\deg_G(x_i), 0)$ and $\deg_{G \times \ZZ^s}(T_{i,j}) = (\deg_G(f_{i,j}),e_i)$, where $e_i$ denotes the $i$-th canonical generator of $\ZZ^s$. The natural surjective map $\phi: R \twoheadrightarrow \R$ that sends $x_i$ to $x_i$ and $T_{i,j}$ to $f_{i,j}T_i$ makes $M\R$ a finitely generated $G \times \ZZ^s$-graded module over $R$.

Let $\F_\bullet$ be a  $G \times \ZZ^s$-graded free resolution of $M\R$ over $R$. If $A$ is Noetherian, then each $\F_i$ can be chosen of finite rank, and we make such a choice. For $t\in\ZZ^s$, the degree $(*,t)$-strand $\F^t_\bullet$ of $\F_\bullet$ provides a $G$-graded free resolution of $MI^t$ over $S = R_{(*,0)}$. Thus,
$$\tor^S_i(MI^t,A) = H_i(\F_\bullet^t \otimes_S A).$$
Moreover, taking homology respects the graded structure, and therefore,
$$H_i(\F_\bullet^t \otimes_S A) = H_i(\F_\bullet \otimes_R R/\m R)_{(*,t)},$$
where $\m = (x_1, \dots, x_n)$ is the homogeneous irrelevant ideal in $S$.

Let $\Gamma' = \{(\gamma_{i,j},e_i) \in G \times \ZZ^s\}=\coprod_i \Gamma_i\times \{ e_i\}$ be the set of degrees of the variables $T_{i,j}$. Observe that $H_i(\F_\bullet \otimes_R R/\m R)$ is a finitely generated $G \times \ZZ^s$-graded module over $R/\m R \simeq B$ for any $i$ if $A$ is Noetherian, and for $i=0$ in any case. Applying Theorem \ref{thm.arbitrarydegree} to the $G\times \ZZ^s$-graded module $H_i(\F_\bullet \otimes_R R/\m R)$ we obtain a finite collection of elements $\theta_p^\ell \in G \times \ZZ^s$ and a finite collection of linearly independent subsets $E^\ell_p\subseteq \Gamma'$ (which we view in a fixed order as tuples), for $p = 1, \dots, m$, such that
$$\supp_{G \times \ZZ^s}\big(H_i(\F_\bullet \otimes_R R/\m R)\big) = \bigcup_{p=1}^m \big(\theta^\ell_p + \langle E^\ell_p\rangle\big).$$
Let $\theta_p^\ell = (\delta_p^\ell, t_{1,p}^\ell ,\ldots ,t_{s,p}^\ell )$, where $\delta_p^\ell \in G$ and $t_{i,p}^\ell \in \ZZ$.
One has $E_p^\ell =\coprod_{i=1}^{s}E_{p,i}^\ell \times \{e_i\}$. The linear independence of the elements in $E_p^\ell$ is equivalent to the fact that the elements of $\Delta E_{p,1}^\ell \vert \cdots \vert\Delta E_{p,s}^\ell$ are linearly independent.

Taking the degree $(*,t)$-strand of $H_i(\F_\bullet \otimes_R R/\m R)$, we get
$$\supp_G(\tor^S_\ell(MI_1^{t_1}\cdots I_s^{t_s},A)) = \bigcup_{p=1}^m \big( \delta_p^\ell  +\bigcup_{\c_i \in \ZZ_{\ge 0}^{|E_{p,i}^\ell |}, |\c_i | = t_i-t^\ell_{p,i}} \c_1.E_{p,1}^\ell +\cdots +\c_s.E_{p,s}^\ell \big).$$
\end{proof}

\begin{example} Let $I \subseteq S$ be a complete intersection ideal of three forms $f_1,f_2,f_3$ of degrees $a,b,c$. On easily sees, for instance by the Hilbert Burch theorem, that
$$ \xymatrix{ 0\ar[r]&{\begin{array}{c}R(-a-b-c,-2)\\\bigoplus\\ R(-a-b-c,-1)\\ \end{array}}\ar^(.54){\left( \begin{matrix}T_1&T_2&T_3\\f_1&f_2&f_3\\  \end{matrix}\right) }[rr]&& {\begin{array}{c}R(-b-c,-1)\\\bigoplus\\ R(-a-c,-1)\\\bigoplus\\ R(-a-b,-1)\\ \end{array}}\ar^(.67){\left( \begin{matrix}f_2T_3-f_3T_2\\ f_3T_1-f_1T_3\\f_1T_2-f_2T_1\\\end{matrix}\right) }[rr] &&R\ar[r]&\R_I\ar[r]&0\\ } $$
is a graded free $R$-resolution of $\R_I$.

It follows that $\tor_0^S(I^t,A)_\mu =B_{\mu ,t}$, $\tor_2^S(I^t,A)_\mu =B_{\mu -a-b-c ,t-1}$, and
$$ \tor_1^S(I^t,A)_\mu =\hbox{\rm coker} \left( \xymatrix{B_{\mu -a-b-c,t-2}\ar^(.54){\left( \begin{matrix}T_1&T_2&T_3\\  \end{matrix}\right) }[rr]&& {\begin{array}{c}B_{\mu-b-c,t-1}\\\bigoplus\\ B_{\mu -a-c,t-1}\\\bigoplus\\ B_{\mu -a-b,t-1}\\ \end{array}}} \right). $$
Now, $\tor_1^S(\R_I,A)$ has the following Stanley decomposition:
$$
A[T_1,T_2,T_3](-b-c,-1)\bigoplus A[T_1,T_3](-a-c,-1)\bigoplus A[T_1,T_2,T_3](-a-b,-1). $$
The ideal $H$ generated by the binomials $T^\alpha -T^\beta$ with $\deg (T^\alpha  )=\deg (T^\beta )$ is the kernel of the map
$$ \xymatrix@R=1pt{ A[T_1,T_2,T_3]\ar[r]& A[u,v]\\  T_1\ar[r]&uv^a\\  T_2\ar[r]&uv^b\\  T_3\ar[r]&uv^c\\  }  $$
and is therefore generated by a single irreducible and homogeneous binomial.

If, for example, $(a,b,c)=(2,5,8)$ then this relation is $T_2^2-T_1T_3$, and  a Stanley decomposition of, for instance, $A[T_1,T_2,T_3]/(T_2^2)$ is $A[T_1,T_3]\oplus T_2A[T_1,T_3](-5,-1)$. It finally gives the following  decomposition for $ \tor_1^S(\R_I ,A)$
:
$A[T_1,T_3](-13,-1)\oplus T_2A[T_1,T_3](-18,-2)\oplus A[T_1,T_3](-10,-1)\oplus A[T_1,T_3](-7,-1)\oplus T_2A[T_1,T_3](-12,-2).$

Setting $E_t:=\{ 2\alpha +8\beta ~\vert\ \alpha, \beta \in \ZZ_+, \alpha +\beta =t\}=2t+6\{ 0,\cdots ,t\}$, one gets that for $t\geq 2$,

 -- $\supp_{\ZZ}(\tor_0^S(I^t,A))=E_t \cup (5+E_{t-1})$,

  -- $\supp_{\ZZ}(\tor_1^S(I^t,A))=(13+E_{t-1}) \cup (18+E_{t-2}) \cup (10+E_{t-1}) \cup (7+E_{t-1}) \cup (12+E_{t-2})$,

   -- $\supp_{\ZZ}(\tor_2^S(I^t,A))=(15+E_{t-1}) \cup (20+E_{t-2})$.

   Notice that one has the simplified expression :

    -- $\supp_{\ZZ}(\tor_1^S(I^t,A))=(5+E_{t})  \cup (10+E_{t-1})$.

\end{example}


\end{document}